\newtheorem{theo}{Theorem}[section]
\newtheorem{lemma}[theo]{Lemma}
\newtheorem{defi}[theo]{Definition}
\newtheorem{prop}[theo]{Proposition}
\newtheorem{cor}[theo]{Corollary}
\newtheorem{remark}[theo]{Remark}
\newtheorem{example}[theo]{Example}
\numberwithin{equation}{section}
\def\pre-tr{\operatorname{pre-tr}}
\def\Hom{\operatorname{Hom}}
\def\End{\operatorname{End}}
\newcommand{\bbG}{{\mathbb G}}
\newcommand{\cO}{{\mathcal O}}
\newcommand{\cL}{{\mathcal L}}
\newcommand{\cM}{{\mathcal M}}
\newcommand{\cA}{{\mathcal A}}
\newcommand{\cB}{{\mathcal B}}
\newcommand{\cI}{{\mathcal I}}
\newcommand{\cR}{{\mathcal R}}
\newcommand{\cH}{{\mathcal H}}
\newcommand{\Sets}{\operatorname{Sets}}
\newcommand{\id}{\operatorname{id}}
\newcommand{\Sch}{\operatorname{Sch}}
\newcommand{\Rep}{\operatorname{Rep}}
\title[Hilbert schemes of points for associative algebras]
{Hilbert schemes of points for associative algebras}
\author{Michael~Larsen}
\address{Department of Mathematics, Indiana University,
Bloomington, IN 47405, USA} \email{mjlarsen@indiana.edu}
\author{Valery A.~Lunts}
\address{Department of Mathematics, Indiana University,
Bloomington, IN 47405, USA} \email{vlunts@indiana.edu}
\thanks{The first named author was partially supported by the NFS grant 1101424.
The second named author was partially supported by the NSF grant
0901301}
\thanks{}
\begin{document}

\begin{abstract} For a finitely generated associative algebra $\cA$ over a
commutative ring $k$ we construct the Hilbert scheme ${\bf
H}^{[n]}_{\cA}$ which parametrizes left ideals in $\cA$ of
codimension $n.$
\end{abstract}

\maketitle

\section{Introduction}

Fix a commutative ring  $k.$ We denote by $F_m=k \langle
x_1,...,x_m\rangle$ the free associative $k$-algebra on $m$
generators. For a two-sided ideal $\cR \subset F_m$ consider the
quotient $k$-algebra $\cA =F_m/\cR.$ We are going to construct the
"Hilbert scheme ${\bf H}^{[n]}_{\cA}$ of $n$-points" in
"$\mathrm{Spec} \cA,$" i.e. the moduli space of left ideals in
$\cA$ of codimension $n.$ As in the usual case of a commutative
algebra $\cA$ this Hilbert scheme represents the corresponding
functor. We proceed in two steps: first we construct the scheme
$\tilde{{\bf H}}^{[n]}_{\cA},$ which we call the "based Hilbert
scheme". It parametrizes left ideals $I\subset \cA$ with a choice
of a basis in the cyclic module $\cA /I.$ This scheme $\tilde{{\bf
H}}^{[n]}_{\cA}$ is quasi-affine and carries a natural free
$\mathrm{GL}_n$-action, so that the quotient scheme is ${\bf
H}^{[n]}_{\cA}.$ Thus the natural map
$$\pi :\tilde{{\bf H}}^{[n]}_{\cA}\to {\bf H}^{[n]}_{\cA}$$
is a Zariski locally trivial $\mathrm{GL}_n$-bundle. We do not use
Geometric Invariant Theory to find ${\bf H}^{[n]}_{\cA};$ all our
constructions are explicit.

We then proceed to show that ${\bf H}^{[n]}_{\cA}$ is naturally a
projective scheme (over an appropriate affine scheme).

In the last section we have collected some facts about free
actions of group
schemes and equivariant quasi-coherent sheaves.

Hilbert schemes as ours we considered first by Van den Bergh
\cite{VdB} for the free algebra $F_m$ over a field. He shows in
particular that ${\bf H}^{[n]}_{F_m}$ is smooth and constructs for
it a natural cell decomposition. Later similar (or close)
constructions appeared also in
\cite{LeB},\cite{LeB-Se},\cite{Rei}, \cite{En-Rei}. The case of
graded (very general) Hilbert schemes was teated in \cite{Ar-Zh}.

We thank Michel Van den Bergh and Markus Reineke for pointing out
to us the above references. We also thank Tony Pantev for his
suggestion that our work may be related to \cite{Ar-Zh} (at the
moment we do not know if there is a relation).

\section{Construction of the Hilbert scheme for associative algebras}
 Denote by $\Sch
_k$ the category of $k$-schemes.

Fix a finitely generated associative $k$-algebra $\cA.$ For $X\in
\Sch _k$ we denote by $\cA_{X}$ the sheaf of $\cO _X$-algebras which
is associated to the presheaf $\cO _X\otimes _{k}\cA.$

\subsection{Based Hilbert scheme $\tilde{\bf{H}}^{[n]}_{\cA}$}

\begin{defi} Denote by $\tilde{{\bf M}}^{[n]}_{\cA}:\Sch _k \to \Sets$
the contravariant functor from the category of $k$-schemes to the
category of sets, which is defined as follows. For a scheme $X$ the
set $\tilde{{\bf M}}^{[n]}_{\cA}(X)$ is the set of equivalence
classes of triples $(M,v,B)$ where $M$ is a left $\cA_{X}$-module
which is free of rank $n$ as $\cO _X$-module, $B\subset \Gamma
(X,M)$ is a basis of the $\cO _X$-module $M,$
and  $v\in \Gamma (X,M)$ generates $M$ as
an $\cA_{X}$-module. The equivalence relation is the obvious one.

A morphism of $k$-schemes $f:Y\to X$ induces an isomorphism of
sheaves of algebras
$$f^*\cA_{X}=\cO _Y\otimes _{f^{-1}(\cO _X)}f^{-1}\cA_{X}\to
\cA_{Y}$$ which gives the map of sets $f^*:\tilde{{\bf
M}}^{[n]}_{\cA}(X)\to \tilde{{\bf M}}^{[n]}_{\cA}(Y)$.
\end{defi}

\begin{theo} \label{representability-based-hilbert} Fix $n\geq 1.$
The functor $\tilde{{\bf M}}^{[n]}_{\cA}$ is represented by a
quasi-affine $k$-scheme $\tilde{{\bf H}}^{[n]}_{\cA}.$ If the ground
ring $k$ is noetherian, then $\tilde{{\bf H}}^{[n]}_{\cA}$ of finite
type over $k.$
\end{theo}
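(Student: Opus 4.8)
The plan is to rigidify the moduli problem using the basis $B$, thereby identifying $\tilde{{\bf M}}^{[n]}_{\cA}$ with a functor of matrices subject to the defining relations of $\cA$ together with a cyclicity condition, and then to realize the relations as a closed condition and cyclicity as an open condition inside an affine space. First I would observe that, since a triple $(M,v,B)$ carries an \emph{ordered} basis $B=(b_1,\dots,b_n)$ of the free $\cO_X$-module $M$, every equivalence class has a unique representative with $M=\cO_X^{\,n}$ and $B$ the standard basis (an isomorphism of triples must carry $B$ to $B'$, hence is determined). Under this identification the $\cA_X$-module structure on $\cO_X^{\,n}$ is exactly the $\cO_X$-linear action of the generators $x_1,\dots,x_m$, i.e.\ $m$ matrices $A_1,\dots,A_m\in M_n(\Gamma(X,\cO_X))$, while $v$ is a column vector in $\Gamma(X,\cO_X)^{\,n}$. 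Giving such data is the same as giving $mn^2+n$ global sections of $\cO_X$, i.e.\ a morphism $X\to\mathbb{W}$, where
$$\mathbb{W}=\Spec k\!\left[\,a^{(l)}_{ij},\,v_i \;:\; 1\le l\le m,\ 1\le i,j\le n\,\right]\cong\bbA^{\,mn^2+n}_k.$$
Thus, before imposing conditions, the functor of triples is represented by $\mathbb{W}$, and it remains to cut out the loci where (a) $\cR$ acts by zero and (b) $v$ generates.

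Condition (a) is closed. Over $\mathbb{W}$ let $A_l$ be the universal matrix with entries $a^{(l)}_{ij}$; for $r\in\cR$ the entries of $r(A_1,\dots,A_m)$ are polynomials in the $a^{(l)}_{ij}$, and I let $\mathcal{J}$ be the ideal they generate as $r$ ranges over $\cR$, setting $\mathbb{W}_{\cR}=\Spec\big(k[a^{(l)}_{ij},v_i]/\mathcal{J}\big)$. Since factoring through a closed subscheme is Zariski-local and, over affine $X$, a morphism factors through $\mathbb{W}_\cR$ precisely when $r$ acts by zero for all $r$, the affine scheme $\mathbb{W}_{\cR}$ represents the subfunctor satisfying (a).

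The essential point, and the one delicate step, is that condition (b) is \emph{open} and is detected by a single finite test. For a word $w=x_{i_1}\cdots x_{i_k}$ write $A_w=A_{i_1}\cdots A_{i_k}$ (with $A_\emptyset=\id$), and consider $\Phi\colon\cO_X^{\,N}\to\cO_X^{\,n}$ with $N=1+m+\cdots+m^{n-1}$, sending the basis vector indexed by $w$ to $A_w v$ over all words $w$ of length $\le n-1$. Surjectivity of $\Phi$ plainly forces $v$ to generate $M$. For the converse I would argue fibrewise: generation is preserved by base change, so at each point $x\in X$ the image $\bar v$ generates $\kappa(x)^{\,n}$; over the field $\kappa(x)$ the subspaces spanned by $\{A_w\bar v:\ \ell(w)\le k\}$ strictly increase until they stabilise, hence already fill $\kappa(x)^{\,n}$ at $k=n-1$, so $\Phi\otimes\kappa(x)$ is surjective. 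As this holds at every point, $\operatorname{coker}\Phi$ has empty support and, being finitely generated, vanishes, so $\Phi$ is surjective. Therefore (b) is equivalent to surjectivity of $\Phi$, which holds exactly on the open set $U\subset\mathbb{W}$ where the $n\times n$ minors of $\Phi$ generate the unit ideal. I expect this word-length bound over an arbitrary base (rather than over a field) to be the main obstacle; the rest is bookkeeping.

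Finally I would set $\tilde{{\bf H}}^{[n]}_{\cA}=U\cap\mathbb{W}_{\cR}$, an open subscheme of the affine scheme $\mathbb{W}_{\cR}$, hence quasi-affine. On it the standard basis, the universal matrices (which kill $\cR$, so define an $\cA$-action), and the universal vector (cyclic, since we lie over $U$) form a universal triple, and the rigidification above shows that pullback of this triple gives a bijection $\Hom(X,\tilde{{\bf H}}^{[n]}_{\cA})\to\tilde{{\bf M}}^{[n]}_{\cA}(X)$, establishing representability. For the last assertion, $U=\bigcup_\delta D(\delta)$ is a \emph{finite} union over the $n\times n$ minors $\delta$ of $\Phi$, so $\tilde{{\bf H}}^{[n]}_{\cA}$ is covered by the finitely many affines $\Spec\big((k[a^{(l)}_{ij},v_i]/\mathcal{J})[\delta^{-1}]\big)$, each the spectrum of a finitely generated $k$-algebra; hence $\tilde{{\bf H}}^{[n]}_{\cA}$ is of finite type over $k$, and in particular a noetherian scheme when $k$ is noetherian.
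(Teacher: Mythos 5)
Your proof is correct, and its overall route is the paper's: rigidify via the ordered basis so that a triple over $X$ becomes $m$ matrices $A_s\in M_n(\Gamma(X,\cO_X))$ and a column vector, i.e.\ a morphism to the affine space $W\times V=\Spec k[t^s_{ij},y_l]$; cut out the relations in $\cR$ as a closed subscheme (your $\bbW_{\cR}$ is the paper's $\Rep^{[n]}_{\cA}\times V$); and realize cyclicity of $v$ as an open locus defined by nonvanishing of determinants, with the universal triple and the rigidification giving the two mutually inverse morphisms of functors. Where you genuinely differ is in how that open locus is produced. The paper takes the union of the opens $D_{f_1\ldots f_{n-1}}\neq 0$ over \emph{all} $(n-1)$-tuples $f_1,\ldots,f_{n-1}\in F_m$, an a priori infinite union, and for the finite-type claim it must invoke noetherianity of $k$ to extract a finite subcover; moreover it leaves as ``one checks'' the fact that the classifying map of a triple actually lands in this union. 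You instead prove the word-length bound (over a field, the vectors $A_w\bar v$ with $\ell(w)\le n-1$ already span once $\bar v$ generates, by filtration stabilization), upgrade it to an arbitrary base via Nakayama on the cokernel, and so express cyclicity as surjectivity of a single map $\Phi$ whose finitely many $n\times n$ minors define the open set. This is a real refinement: it yields quasi-compactness of $\tilde{{\bf H}}^{[n]}_{\cA}$ and finite type over $k$ with \emph{no} noetherian hypothesis, since each $D(\delta)\cap\bbW_{\cR}$ is the spectrum of a finitely generated $k$-algebra (a localization of a quotient of a polynomial ring in finitely many variables, regardless of whether the ideal $\mathcal{J}$ is finitely generated), and it supplies the pointwise verification the paper omits. (The paper does use the same length bound, but only for a single operator, in its $m=1$ lemma inside Theorem \ref{projectivity-for-free-algebra}.) Two harmless discrepancies: your minors need not contain the column $Y$ itself, which changes nothing since both descriptions carve out the same set of points; and ``the minors generate the unit ideal'' should be read pointwise, i.e.\ $U=\bigcup_\delta D(\delta)$, which is what you in fact use.
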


\begin{proof} a) Consider the polynomial ring $k[t_{ij}^s]$ in the
set of $mn^2$ variables $t_{ij}^s,$ where $1\leq s\leq m,1\leq
i,j\leq n.$ Put $W=\mathrm{Spec} k[t_{ij}^s].$  For each $s$
denote by $A_s=(t^s_{ij})$ the square $n\times n$ matrix with
$t^s_{ij}$ as the $(i,j)$-entry. Put $V=\mathrm{Spec}
k[y_1,...,y_n]$ and let $Y$  denote the column vector
$Y=(y_1,...,y_n)^t.$ It will be convenient for us to arrange the
linear coordinates on the scheme $W\times V$ as $m$-tuples of
matrices and a column vector $(A_1,...,A_m,Y).$
 For each ordered $(n-1)$-tuple of elements $f_1,...,f_{n-1}\in
F_m$ define the open subscheme $U_{f_1...f_{n-1}} \subset
W\times V$ by the equation
\begin{equation} \label{det-neq-zero}
D _{f_1...f_{n-1}}:=\det
(Y,f_1(A_1,...,A_m)Y,...,f_{n-1}(A_1,...,A_m)Y)\neq 0
\end{equation}
I.e. the coordinate ring of the affine scheme $U_{f_1...f_{n-1}}$ is
the localization of the coordinate ring of $W\times V$ at the
element $D _{f_1...f_{n-1}}.$ Let ${\bf U}\subset W\times V$ be the
open subscheme which is the union of $U_{f_1...f_{n-1}}$'s for
all $f_1,...,f_{n_1}\in F_m.$ Denote by $Z=W\times V-{\bf U}$ the
complementary closed subscheme which is defined by the simultaneous
vanishing of the determinants $D_{f_1...f_{n-1}}.$   If
$k$ is noetherian, so is the scheme $W\times V,$ hence in this case
${\bf U}$ is the union of a finite number of $U_{f_1...f_{n-1}}$'s.

 For every relation $r(x_1,...,x_m)\in \cR$ in the
algebra $\cA$ consider the corresponding matrix $r(A_1,...,A_m).$
Denote by $\Rep _{\cA}^{[n]} \subset W$ the closed subscheme defined by
setting $r(A_1,...,A_m)=0$ for all $r\in \cR.$ (So the $S$-points of
$\Rep _{\cA}^{[n]}$ correspond to $k$-algebra homomorphisms $\cA \to
M_n(S)$ for any commutative $k$-algebra $S.$)

Finally define $\tilde{{\bf H}}^{[n]}_{\cA}$ as the scheme ${\bf U}
\cap (\Rep _{\cA}^{[n]}\times V).$ Clearly $\tilde{{\bf H}}^{[n]}_{\cA}$
is quasi-affine. If $k$ is noetherian then $\tilde{{\bf
H}}^{[n]}_{\cA}$ is of finite type over $k.$ We claim that is
represents the functor $\tilde{{\bf M}}^{[n]}_{\cA}.$

First we construct the universal triple $(M_0,v_0,B_0)$ over
$\tilde{{\bf H}}^{[n]}_{\cA}.$ Namely consider the free (left)
$k[t^s_{ij},y_l]$-module $M_0=k^n\otimes _kk[t^s_{ij},y_l]$ of rank
$n$ with the basis $B_0=\{e_l\otimes 1\}$ given by the standard
basis $\{e_l\}$ of $k^n,$ and the distinguished element $v_0=\sum
e_l\otimes y_l.$ It will be convenient for us to think of an element
$\sum e_l\otimes \alpha _l\in M_0$ (with $\alpha _l\in
k[t^s_{ij},y_l]$) as the dot product (i.e. the matrix product)
$${\bf e}\bullet \alpha ^t=
(e_1,...,e_n)\bullet (\alpha _1,...,\alpha _n)^{t}$$ of the row
vector ${\bf e}=(e_1,...,e_n)$ and the column vector ${\bf
\alpha}^t=(\alpha _1,...,\alpha _n)^{t}.$ In particular $v_0= {\bf
e}\bullet Y.$

The free algebra $F_m$ acts by endomorphisms of
this module $k[t^s_{ij},y_l]$-module $M_0$
via the matrices $A_s.$ Namely in the above
notation
$$x_s({\bf e}\bullet {\bf \alpha }^t)={\bf e}\bullet A_s{\bf \alpha }^t.$$
(Here $A_s\alpha ^t$ is the matrix product of the square matrix
$A_s$ with the column vector $\alpha ^t.$)
 If we
restrict this module to the closed
 subscheme $\Rep _{\cA}^{[n]}\times V,$ then the $F_m$-action descends
to the $\cA$-action. Finally if we further restrict to the open
subset $\tilde{{\bf H}}^{[n]}_{\cA},$ then the element
$v_0$ will be a generator of this $\cA _{\tilde{{\bf
H}}^{[n]}_{\cA}}$-module. We denote again by
$(M_0,v_0,B_0)$ the resulting
triple over $\tilde{{\bf H}}^{[n]}_{\cA}.$

This universal triple over $\tilde{{\bf H}}^{[n]}_{\cA}$ defines an
obvious morphism of the functor represented by the scheme
$\tilde{{\bf H}}^{[n]}_{\cA}$ to the functor $\tilde{{\bf
M}}^{[n]}_{\cA}.$ To show that this morphism is an isomorphism we
will construct the inverse map. Namely let $X$ be a $k$-scheme with
a triple $(M,v,B).$ Each generator $x_s$ of the algebra $\cA$ acts
on the basis $B$ by an $n\times n$-matrix with values in $\Gamma
(X,\cO _X).$ Thus to each variable $t^s_{ij}$ we associate a global
function on $X,$ which is the $(i,j)$-entry of this matrix. Now
express the element $v$ as a $\Gamma (X,\cO _X)$-linear combination
$v=\sum \gamma _l b_l$ of the vectors $b_l$ in $B$ and associate to
$y_l$ the element $\gamma _l.$ One checks that this association
defines a morphism of schemes $f:X\to \tilde{{\bf H}}^{[n]}_{\cA}$
so that the pullback of the universal triple is isomorphic to
$(M,v,B).$ This defines the inverse morphism of functors and proves
the theorem.
\end{proof}

\begin{example} \label{example-1}
Let $n=1.$ Then $\Rep ^{[1]}_{\cA}=Spec \cA _{ab},$ where $\cA
_{ab}=\cA /(\cA [\cA ,\cA]\cA)$ is the abelianization of the algebra
$\cA;$ $W\times V=Spec k[t^1,...,t^m,y]$ and ${\bf U}=\{y\neq 0\}.$
So $\tilde{{\bf H}}^{[1]}_{\cA}=Spec \cA _{ab}\times \bbG _m.$
\end{example}

\begin{prop} \label{extension-scalars-based-hilbert}
In the above notation let $k\to k^\prime $ be a homomorphism of
commutative algebras. Consider the algebra $\cA ^\prime=\cA \otimes
_{k }k ^\prime.$ Then for any $n\geq 1$
$$\tilde{\bf{H}}^{[n]}_{\cA ^\prime}=\tilde{\bf{H}}^{[n]}_{\cA}
\times_kk^\prime.$$
\end{prop}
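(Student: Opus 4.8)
The plan is to argue functorially, leaning on the representability proved in Theorem \ref{representability-based-hilbert} together with the standard principle that base change of a representing scheme represents the restricted functor. First note that $\cA^\prime=\cA\otimes_kk^\prime$ is again finitely generated over $k^\prime$: if $x_1,\dots,x_m$ generate $\cA$ over $k$ then their images generate $\cA^\prime$ over $k^\prime$, so $\tilde{{\bf H}}^{[n]}_{\cA^\prime}$ is defined and represents $\tilde{{\bf M}}^{[n]}_{\cA^\prime}\colon\Sch_{k^\prime}\to\Sets$. For the $k$-scheme $\tilde{{\bf H}}^{[n]}_{\cA}$ and any $X\in\Sch_{k^\prime}$ there is a chain of natural bijections
\[
\Hom_{k^\prime}\bigl(X,\tilde{{\bf H}}^{[n]}_{\cA}\times_kk^\prime\bigr)=\Hom_k\bigl(X,\tilde{{\bf H}}^{[n]}_{\cA}\bigr)=\tilde{{\bf M}}^{[n]}_{\cA}(X),
\]
the first equality being the universal property of the fiber product and the second representability. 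Hence the $k^\prime$-scheme $\tilde{{\bf H}}^{[n]}_{\cA}\times_kk^\prime$ represents the restriction of $\tilde{{\bf M}}^{[n]}_{\cA}$ to $\Sch_{k^\prime}$, and by Yoneda it will suffice to exhibit a natural isomorphism of functors $\tilde{{\bf M}}^{[n]}_{\cA^\prime}\cong\tilde{{\bf M}}^{[n]}_{\cA}$ on $\Sch_{k^\prime}$.

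Next I would show that the two functors assign the very same triples. Fix $X\in\Sch_{k^\prime}$, regarded simultaneously as a $k$-scheme via $k\to k^\prime$. In either functor a point is a triple $(M,v,B)$ with $M$ free of rank $n$ over $\cO_X$, a basis $B$, a generator $v$, and a module action; the only potential discrepancy is the sheaf of algebras acting on $M$. But for a $k^\prime$-scheme $X$ the sections $\cO_X(U)$ are $k^\prime$-algebras, so associativity of tensor product gives
\[
\cO_X\otimes_{k^\prime}\cA^\prime=\cO_X\otimes_{k^\prime}(k^\prime\otimes_k\cA)=\cO_X\otimes_k\cA
\]
at the level of presheaves, whence $\cA^\prime_X=\cA_X$ as sheaves of $\cO_X$-algebras after sheafification. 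Thus a left $\cA^\prime_X$-module structure on $M$ is exactly a left $\cA_X$-module structure, and the conditions that $M$ be free of rank $n$ over $\cO_X$, that $B$ be a basis, and that $v$ generate are all phrased in terms of $\cO_X$ and $\cA_X=\cA^\prime_X$. Therefore $\tilde{{\bf M}}^{[n]}_{\cA^\prime}(X)=\tilde{{\bf M}}^{[n]}_{\cA}(X)$ as sets.

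The main point requiring care is the naturality of this identification in $X$, i.e. its compatibility with the pullback maps $f^*$ from the definition of $\tilde{{\bf M}}^{[n]}_{\cA}$; but $f^*$ is induced by the same base-change isomorphism of sheaves of algebras in both cases, so naturality is immediate. Combined with the first paragraph this yields $\tilde{{\bf H}}^{[n]}_{\cA^\prime}=\tilde{{\bf H}}^{[n]}_{\cA}\times_kk^\prime$. I expect no serious obstacle: the argument is essentially bookkeeping around the tensor identity and the universal property of base change.

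As an alternative I could verify the claim directly from the explicit construction in the proof of Theorem \ref{representability-based-hilbert}. The polynomial schemes $W$ and $V$, the determinantal locus ${\bf U}$, and its charts $U_{f_1\dots f_{n-1}}$ are defined by equations whose coefficients do not depend on the base ring, so they base change along $k\to k^\prime$; and since tensoring the presentation $\cR\to F_m\to\cA\to 0$ with $k^\prime$ is right exact, one has $\cA^\prime=F_m^\prime/\cR^\prime$ with $F_m^\prime=k^\prime\langle x_1,\dots,x_m\rangle$ and $\cR^\prime$ the image of $\cR\otimes_kk^\prime$, so $\Rep^{[n]}_{\cA^\prime}=\Rep^{[n]}_{\cA}\times_kk^\prime$ as closed subschemes of $W\times_kk^\prime$. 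Intersecting then commutes with base change. The one subtlety on this route is precisely the identification of the relation ideal $\cR^\prime$ after base change; the functorial argument avoids committing to any presentation and is what I would write up.
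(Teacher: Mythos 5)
Your proof is correct, but it takes a genuinely different route from the paper's. The paper argues directly from the explicit construction in Theorem \ref{representability-based-hilbert}: it checks that $\Rep^{[n]}_{\cA'}=\Rep^{[n]}_{\cA}\times_k k'$ and that the complementary closed determinantal locus $Z$ base changes correctly, the key point being that the determinants $D_{f_1\dots f_{n-1}}$ with $f_i\in k'\langle x_1,\dots,x_m\rangle$ are $k'$-linear combinations of those with word (hence $k$-defined) entries, ``since the determinants are multi-linear in columns''; this is exactly what is needed because the open cover defining $\tilde{{\bf H}}^{[n]}_{\cA'}$ ranges over tuples from $F'_m$, not just $F_m$. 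Your main argument instead is purely functorial: base change of a representing scheme represents the restricted functor, and for $X\in \Sch_{k'}$ one has a canonical identification $\cA'_X\cong\cA_X$ of sheaves of $\cO_X$-algebras, so $\tilde{{\bf M}}^{[n]}_{\cA'}$ and $\tilde{{\bf M}}^{[n]}_{\cA}$ agree on $\Sch_{k'}$ and Yoneda finishes the proof. This buys robustness and brevity: no presentation $\cA=F_m/\cR$ is invoked, so the subtleties about the ideal $\cR'$ and about the extra charts with $k'$-coefficients simply never arise. What the paper's coordinate-level identification buys in exchange is that the $\mathrm{GL}_n$-equivariance of the isomorphism is visible by inspection, which is used immediately in Proposition \ref{extension-scalars-H}; on your route that equivariance needs a (short, also functorial) supplementary check. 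One caveat on your alternative sketch in the last paragraph, which is essentially the paper's proof: flagging only the $\cR'$ issue is not quite enough, since without the multilinearity observation you would only obtain an open immersion $\tilde{{\bf H}}^{[n]}_{\cA}\times_k k'\subseteq\tilde{{\bf H}}^{[n]}_{\cA'}$ rather than equality; but as your write-up relies on the functorial argument, this does not affect the validity of your proof.
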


\begin{proof} For the purpose of this proof denote by $X^\prime$ the
$k^\prime$-scheme obtained from a $k$-scheme $X$ by extension of
scalars from $k$ to $k^\prime.$ We use the notation of the proof of
Theorem \ref{representability-based-hilbert}.

First it is clear that $\Rep ^{[n]}_{\cA ^\prime}=
(\Rep ^{[n]}_{\cA})^\prime.$  Hence
$(\Rep ^{[n]}_{\cA}\times _k V)^\prime=
\Rep ^{[n]}_{\cA ^\prime}\times _{k^\prime} V^\prime .$

Next recall that the $\tilde{\bf{H}}^{[n]}_{\cA}$ is the open
subscheme of $\Rep ^{[n]}_{\cA}\times _k V$ which is the complement
of the closed subscheme $Z$ defined by the ideal generated by
elements $D_{f_1...f_{n-1}}.$ Extending the scalars to $k^\prime$ we
see that the closed subscheme $Z^\prime \subset (\Rep
^{[n]}_{\cA}\times _k V)^\prime= \Rep ^{[n]}_{\cA ^\prime}\times
_{k^\prime} V^\prime$ is the complement of
$\tilde{\bf{H}}^{[n]}_{\cA ^\prime}$ (since the determinants are
multi-linear in columns).  Hence it follows that
$(\tilde{\bf{H}}^{[n]}_{\cA})^\prime =\tilde{\bf{H}}^{[n]}_{\cA
^\prime }.$
\end{proof}

\begin{remark} \label{surjection=closed-embedding}
Let $\phi :\cA \to \cB$ be a surjective homomorphism
of (finitely generated associative) $k$-algebras.
This induces an obvious
morphism of functors $\phi _*:\tilde{\bf{M}}^{[n]}_{\cB}\to
\tilde{\bf{M}}^{[n]}_{\cA}$ by restriction of scalars from $\cB$ to
$\cA.$ This morphism of functors corresponds to a morphism of
representing schemes $\phi _*:\tilde{\bf {H}}^{[n]}_{\cB}\to
\tilde{\bf {H}}^{[n]}_{\cA}.$ It follows easily from the description
the scheme $\tilde{\bf{H}}^{[n]}$ in Theorem
\ref{representability-based-hilbert}  that $\phi _*$ is a closed
embedding.
\end{remark}

\subsection{Hilbert scheme $\bf{H}^{[n]}_{\cA}$}

\begin{defi} \label{defi-of-hilbert-functor}
Denote by ${\bf M}^{[n]}_{\cA}:\Sch _k \to \Sets$ the contravariant
functor from the category of $k$-schemes to the category of sets,
which is defined as follows. For a scheme $X$ the set ${\bf
M}^{[n]}_{\cA}(X)$ is the set of equivalence classes of pairs
$(M,v),$  where $M$ is a left $\cA_{X}$-module which is locally free
of rank $n$ as $\cO _X$-module and $v\in \Gamma (X,M)$ generates $M$
as an $\cA_{X}$-module. The equivalence relation is the obvious one.

A morphism of $k$-schemes $f:Y\to X$ induces an isomorphism of
sheaves of algebras
$$f^*\cA_{X}=\cO _Y\otimes _{f^{-1}(\cO _X)}f^{-1}\cA_{X}\to
\cA_{Y}$$ which gives the map of sets $f^*:{\bf M}^{[n]}_{\cA}(X)\to
{\bf M}^{[n]}_{\cA}(Y).$
\end{defi}

Note that we have the obvious forgetful morphism of functors
$\tilde{\bf {M}}^{[n]}_{\cA}\to {\bf M}^{[n]}_{\cA}$ which maps a
triple $(M,v,B)$ to the pair $(M,v).$

\begin{theo} \label{representability-hilbert} Fix $n\geq 1.$
The functor ${\bf M}^{[n]}_{\cA}$ is represented by a $k$-scheme
${\bf H}^{[n]}_{\cA}.$ If $k$ is noetherian then ${\bf
H}^{[n]}_{\cA}$ is of finite type over $k.$ The canonical morphism
$\tilde{{\bf H}}^{[n]}_{\cA}\to {\bf H}^{[n]}_{\cA}$ is a Zariski
locally trivial principal $\mathrm{GL}_n$-bundle
(\ref{def-of-locally-triv-G-bundle}).
\end{theo}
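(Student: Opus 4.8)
The plan is to realize ${\bf H}^{[n]}_{\cA}$ as the quotient of the based Hilbert scheme $\tilde{{\bf H}}^{[n]}_{\cA}$ by a free $\mathrm{GL}_n$-action, and then to invoke the general theory of free group-scheme actions collected in the last section. First I would introduce the action. The group $\mathrm{GL}_n$ acts on $\tilde{{\bf M}}^{[n]}_{\cA}$ by change of the chosen basis, sending a triple $(M,v,B)$ to $(M,v,g\cdot B)$. In the coordinates of Theorem \ref{representability-based-hilbert}, where a point of $\tilde{{\bf H}}^{[n]}_{\cA}\subset \Rep^{[n]}_{\cA}\times V$ is an $m$-tuple of matrices together with a column vector $(A_1,\dots,A_m,Y)$, this action reads
\[
g\cdot(A_1,\dots,A_m,Y)=(gA_1g^{-1},\dots,gA_mg^{-1},gY).
\]
Conjugation preserves every relation matrix $r(A_1,\dots,A_m)$, so $\Rep^{[n]}_{\cA}$ is invariant; and since each determinant $D_{f_1\dots f_{n-1}}$ is multiplied by $\det g$, every basic open $U_{f_1\dots f_{n-1}}$ is invariant as well. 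Hence the action restricts to an algebraic $\mathrm{GL}_n$-action on $\tilde{{\bf H}}^{[n]}_{\cA}$ for which the open cover $\{\tilde U_{f_1\dots f_{n-1}}:=U_{f_1\dots f_{n-1}}\cap\tilde{{\bf H}}^{[n]}_{\cA}\}$ consists of invariant subsets.

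Next I would check freeness, and here the distinguished generator $v$ does the essential work. If $\phi$ is an $\cA_X$-linear endomorphism of $M$ fixing $v$, then it fixes $av$ for every local section $a$ of $\cA_X$; since $v$ generates $M$, this forces $\phi=\id$. Consequently a triple has no nontrivial automorphisms, and $g\cdot(M,v,B)\cong(M,v,B)$ forces $g\cdot B=B$, i.e. $g=1$. Thus the action is (scheme-theoretically) free. The crux is Zariski-local triviality, which I would establish directly on each invariant chart $\tilde U_{f_1\dots f_{n-1}}$. On this chart the matrix
\[
C_{f_1\dots f_{n-1}}:=(Y,\;f_1(A_1,\dots,A_m)Y,\;\dots,\;f_{n-1}(A_1,\dots,A_m)Y)
\]
is invertible, by the very defining inequality (\ref{det-neq-zero}), and under the action its columns are each left-multiplied by $g$, so $C_{f_1\dots f_{n-1}}\mapsto g\,C_{f_1\dots f_{n-1}}$. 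Thus $C_{f_1\dots f_{n-1}}$ is a $\mathrm{GL}_n$-equivariant morphism $\tilde U_{f_1\dots f_{n-1}}\to \mathrm{GL}_n$, with $\mathrm{GL}_n$ acting on itself by left translation. Setting $S_{f_1\dots f_{n-1}}:=\{C_{f_1\dots f_{n-1}}=\Id\}$, the assignment $(g,s)\mapsto g\cdot s$ gives $\mathrm{GL}_n\times S_{f_1\dots f_{n-1}}\isomoto \tilde U_{f_1\dots f_{n-1}}$, with inverse $x\mapsto (C(x),\,C(x)^{-1}\cdot x)$; so the chart is an equivariantly trivial bundle whose quotient is the slice $S_{f_1\dots f_{n-1}}$. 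The slices are quasi-affine, being closed subschemes of the $\tilde U$'s, and of finite type over $k$ when $k$ is noetherian.

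Finally I would assemble the pieces. The charts $\tilde U_{f_1\dots f_{n-1}}$ form an invariant open cover on which the action is free and trivial, so by the general existence result for such quotients (see the last section and Definition \ref{def-of-locally-triv-G-bundle}) the slices $S_{f_1\dots f_{n-1}}$ glue along the $\mathrm{GL}_n$-invariant overlaps to a $k$-scheme ${\bf H}^{[n]}_{\cA}$ together with a Zariski locally trivial principal $\mathrm{GL}_n$-bundle $\pi:\tilde{{\bf H}}^{[n]}_{\cA}\to {\bf H}^{[n]}_{\cA}$, of finite type over noetherian $k$ (the cover then being finite). It remains to identify ${\bf H}^{[n]}_{\cA}$ with the functor ${\bf M}^{[n]}_{\cA}$, and this descent step is where I expect the real work to lie. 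In one direction, the universal triple $(M_0,v_0,B_0)$ acquires a $\mathrm{GL}_n$-equivariant structure after forgetting $B_0$, so by equivariant descent of quasi-coherent sheaves (again the last section) the pair $(M_0,v_0)$ descends to a universal pair over ${\bf H}^{[n]}_{\cA}$, defining a morphism from the functor of points of ${\bf H}^{[n]}_{\cA}$ to ${\bf M}^{[n]}_{\cA}$. In the other direction, given $(M,v)\in {\bf M}^{[n]}_{\cA}(X)$ with $M$ locally free, a Zariski-local choice of basis produces local lifts $X_i\to \tilde{{\bf H}}^{[n]}_{\cA}$; two lifts differ by the transition matrix in $\mathrm{GL}_n$, hence agree after composing with $\pi$, so the maps $X_i\to {\bf H}^{[n]}_{\cA}$ glue to a global $X\to {\bf H}^{[n]}_{\cA}$. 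The main obstacle is verifying that these two constructions are mutually inverse, i.e. that ${\bf M}^{[n]}_{\cA}$ is precisely the Zariski sheaf quotient $\tilde{{\bf M}}^{[n]}_{\cA}/\mathrm{GL}_n$; this rests on the local triviality established above together with the rigidity (absence of automorphisms) of triples.
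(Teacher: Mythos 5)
Your proposal follows essentially the same route as the paper: the same $\mathrm{GL}_n$-action $(A_s,Y)\mapsto(gA_sg^{-1},gY)$, the same equivariant maps $C_{f_1\dots f_{n-1}}$ to $\mathrm{GL}_n$ on the invariant charts yielding local triviality (this is exactly the paper's $\psi_{f_1\dots f_{n-1}}$ combined with Lemma \ref{free-action-lemma} and Proposition \ref{free-action-proposition}), followed by equivariant descent of $(M_0,v_0)$ (Proposition \ref{equivariant-sheaves} and Lemma \ref{descent-of-universal-bundle}) and gluing of local lifts for the inverse functor morphism. The argument is correct and matches the paper's proof in structure and substance.
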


\begin{proof} We use the notation as in the proof of Theorem
\ref{representability-based-hilbert}. The $k$-group scheme
$\mathrm{GL}_n$ acts on the affine scheme $W\times V=\mathrm{Spec}
k[t^s_{ij},y_l]$ in a natural way. Namely if we arrange as above
the linear coordinates on the scheme $W\times V$ as $m$-tuples of
matrices and a column vector $(A_1,...,A_m,Y)$ then a matrix $g\in
\mathrm{GL}_n$ acts by the formula
$$g(A_1,...,A_m,Y)=
(gA_1g^{-1},...,gA_mg^{-1},gY)$$ The closed subscheme $\Rep
_{\cA}^{[n]}\times V$  is invariant under this action. Also each
affine open subscheme $U_{f_1...f_{n-1}} \subset W\times V$ is
$\mathrm{GL}_n$-invariant. Hence ${\bf U}\subset W\times V$ is
invariant, and therefore $\tilde{{\bf H}}^{[n]}_{\cA}$ is also
$\mathrm{GL}_n$-invariant.

Denote $U_{f_1...f_{n-1}}^{\cA}=U_{f_1...f_{n-1}}\cap \tilde{{\bf
H}}^{[n]}_{\cA}$ and consider the $\mathrm{GL}_n$-equivariant map
$\psi _{f_1...f_{n-1}}: U_{f_1...f_{n-1}}^{\cA}\to \mathrm{GL}_n$
which (in the above matrix notation) maps a point
$(a^1,...,a^m,b)$  to the matrix
$(b,f_1(a^1,...,a^m)b,...,f_{n-1}(a^1,...,a^m)b).$ It follows from
Proposition \ref{free-action-proposition} below that there exists
a categorical quotient
$$\pi :\tilde{{\bf H}}^{[n]}_{\cA}\to {\bf H}^{[n]}_{\cA}$$
which is a Zariski locally trivial principal
$\mathrm{GL}_n$-bundle. It remains to prove that the scheme ${\bf
H}^{[n]}_{\cA}$ represents the functor ${\bf M}^{[n]}_{\cA}$ and
that $\pi$ corresponds to the forgetful morphism of functors
$\tilde{\bf {M}}^{[n]}_{\cA}\to {\bf M}^{[n]}_{\cA}.$

First let us lift the $\mathrm{GL}_n$-action to the free $\cO
_{\tilde{\bf H}^{[n]}_{\cA}}$-module $M_0=k^n\otimes _k\cO
_{\tilde{\bf H}^{[n]}_{\cA}}.$ Recall that we represent an element
of $M_0$ as the dot product
$${\bf e}\bullet \alpha ^t=
(e_1,...,e_n)\bullet (\alpha _1,...,\alpha _n)^{t}$$ of the row
vector ${\bf e}=(e_1,...,e_n)$ and the column vector ${\bf
\alpha}^t=(\alpha _1,...,\alpha _n)^{t},$ where $\alpha _l\in
k[t^s_{ij},y_l].$ Then for $g\in \mathrm{GL}_n$ we define
$$g({\bf e}\bullet \alpha ^t):={\bf e}g^{-1}\bullet g(\alpha ^t),$$
where ${\bf e}\bullet g^{-1}$ is the matrix product and $g(\alpha
^t)=(g(\alpha _1),...,g(\alpha _n))^t.$ For $\beta \in
k[t^s_{ij},y_l]$ and $g\in \mathrm{GL}_n$ we have
$$g(\beta({\bf e}\bullet \alpha ^t))=g(\beta)g({\bf e}\bullet \alpha
^t)$$ so that $M_0$ is a $\mathrm{GL}_n$-equivariant vector bundle
on $\cO _{\tilde{\bf H}^{[n]}_{\cA}}.$

It follows from Proposition \ref{equivariant-sheaves} below that
there exists on ${\bf H}^{[n]}_{\cA}$ a unique (up to an
isomorphism) locally free sheaf $\cM$ with an isomorphism $\pi ^*
\cM\simeq M_0.$ This sheaf $\cM$ is isomorphic to $(\pi
_*M_0)^{\mathrm{GL}_n}$ - the subsheaf of $\pi _*M_0$ consisting
of $\mathrm{GL}_n$-invariant sections, i.e.
$\mathrm{GL}_n$-invariant sections of $M_0$ descend to sections of
$\cM.$ The next lemma provides many sections of the sheaf $\cM.$

\begin{lemma} \label{descent-of-universal-bundle}
(a) The element $v_0={\bf e}\bullet Y$ is
$\mathrm{GL}_n$-invariant.

(b) More generally, for any $f(x_1,...,x_m)\in \cA$ the element
$f(x_1,...,x_m)(v_0)$ is $\mathrm{GL}_n$-invariant.

(c) The invariant sections $f(x_1,...,x_m)(v_0)$ generate the sheaf
$\cM$ as an $\cO _{{\bf H}^{[n]}_{\cA}}$-module.

(d) The action of operators $x_i\in \cA$ on $M_0$ descends to an
action on $\cM,$ which makes it an $\cA _{{\bf
H}^{[n]}_{\cA}}$-module.

(e) Denote by $v_u\in \Gamma ({\bf H}^{[n]}_{\cA},\cM)$ the element
$v_0$ considered as a global section of $\cM.$  Then $v_u$ is a
generator of $\cM$ as an $\cA _{{\bf H}^{[n]}_{\cA}}$-module.
\end{lemma}

\begin{proof} (a) and (b). Recall that $f(x_1,...,x_m)({\bf
e}\bullet Y)={\bf e}\bullet f(A_1,...,A_m)Y,$ where
$f(A_1,...,A_m)$ is a square matrix with entries in
$k[t_{ij}^s,y_l]$ and $f(A_1,...,A_m)Y$ is the product of
matrices. Then by definition of the $\mathrm{GL}_n$-action on
$k[t_{ij}^s,y_l]$ and on $M_0$ we have
$$g(f(x_1,...,x_m)({\bf e}\bullet Y))={\bf e}g^{-1}\bullet
g(f(A_1,...,A_m)Y)={\bf e}g^{-1}\bullet gf(A_1,...,A_m)g^{-1} gY,$$
where $gf(A_1,...,A_m)g^{-1} gY$ is the matrix product of matrices
$g,f(A_1,...,A_m),g^{-1},g,Y.$ So
$$g(f(A_1,...,A_m)({\bf e}\bullet Y))={\bf e}\bullet
f(A_1,...,A_m)Y=f(x_1,...,x_m)({\bf e}\bullet Y).$$

c) Choose $f_1,...,f_{n-1}\in \cA$ and consider the corresponding
affine open subset $U_{f_1,...,f_{n-1}}^{\cA}\subset \tilde{\bf H}
^{[n]}_{\cA}$ as above. By Lemma \ref{free-action-lemma} there
exists a $\mathrm{GL}_n$-equivariant isomorphism
$U_{f_1,...,f_{n-1}}\simeq \mathrm{GL}_n\times \overline{U}$ for
an open subset $\overline{U}\subset {\bf H}^{[n]}_{\cA}.$ The
invariant sections
$${\bf e}\bullet Y,\ {\bf e}\bullet f_1(A_1,...,A_m)Y,\ ...,\
{\bf e}\bullet f_{n-1}(A_1,...,A_m)Y$$ form a basis of the
restriction of $M_0$ to $U_{f_1,...,f_{n-1}}.$ It follows that they
also form a basis for the restriction of $\cM$ to $\overline{U}.$

d) Recall that the $\cA$-module structure on $M_0$ is given by the
formula
$$x_s({\bf e}\bullet {\bf \alpha }^t)={\bf e}\bullet A_s{\bf \alpha }^t$$
Hence $x_i$ maps an invariant section ${\bf e}\bullet
f(A_1,...,A_m)Y$ to an invariant section  ${\bf e}\bullet
A_sf(A_1,...,A_m)Y.$ By c) this implies that $x_i$ preserves the
space of invariant sections. So the action of $x_i$ descends to
$\cM,$ which makes it an  $\cA _{{\bf H}^{[n]}_{\cA}}$-module.

e) This is now clear from the proof of c) and d).
\end{proof}

So $(\cM,v_u)$ is a pair on ${\bf H}^{[n]}_{\cA}$ as in Definition
\ref{defi-of-hilbert-functor} and clearly the pair $(M_0,v_0)$ on
$\tilde{\bf H}^{[n]}_{\cA}$ is the pullback of $(\cM,v_u)$ under the
map $\pi .$ It remains to show that ${\bf H}^{[n]}_{\cA}$ represents
the functor ${\bf M}^{[n]}_{\cA}.$  The pair $(\cM,v_u)$ defines a
morphism from the functor represented by ${\bf H}^{[n]}_{\cA}$ to
the functor ${\bf M}^{[n]}_{\cA}.$ Let us construct the inverse
morphism.

Let $Y$ be a scheme and $(N,w)$ be a pair on $Y$ representing an
element of ${\bf M}^{[n]}_{\cA}(Y).$ Choose an open covering
$\{V_i\}$ of $Y$ such that for each $i$ the restriction $N_i:=
N\vert _{V_i}$ is a free $\cO _{V_i}$ module of rank $n.$ Choose a
basis $B_i$ of $N_i$ and let $w_i\in N_i$ be the restriction of
$w.$ Then by Theorem \ref{representability-based-hilbert} the
triple $(N_i,w_i,B_i)$ defines a canonical map $\phi _i:V_i \to
\tilde{\bf H}^{[n]}_{\cA}.$ On the intersection $V_i\cap V_j$
there is a matrix $g\in \mathrm{GL}_n(\cO _{V_i \cap V_j})$ such
that $B_j=B_i\cdot g^{-1}$ (as usual we think of a basis as a row
vector). This matrix defines a map $g:V_i\cap V_j \to
\mathrm{GL}_n$ such that $g\cdot \phi _i=\phi _j.$ This shows that
the compositions $\pi \cdot \phi _i$ and $\pi \cdot \phi _j$ agree
on $V_i\cap V_j,$ hence we obtain the map $\psi :Y\to {\bf
H}^{[n]}_{\cA}$ such that the pullback of the pair $(\cM,v_u)$ is
isomorphic to $(N,w).$ This proves that the functor ${\bf
M}^{[n]}_{\cA}$ is represented by ${\bf H}^{[n]}_{\cA}.$

We also proved that the morphism
$\pi :\tilde{{\bf H}}^{[n]}_{\cA}\to {\bf H}^{[n]}_{\cA}$ is a
Zariski locally trivial principal $Gl _n$-bundle and it corresponds to the
canonical morphism of functors $\tilde{\bf {M}}^{[n]}_{\cA}\to {\bf
M}^{[n]}_{\cA}.$ This proves the theorem.
\end{proof}

\begin{example} \label{example-2} Let $n=1.$ We have seen in Example
\ref{example-1} above that $\tilde{{\bf H}}^{[1]}_{\cA}=Spec
\cA_{ab}\times \bbG _m.$ Then ${\bf H}^{[1]}_{\cA}=Spec\cA _{ab}$
and $\pi $ is the projection.
\end{example}

\begin{prop} \label{extension-scalars-H}
In the above notation let $k\to k^\prime $ be a
homomorphism of commutative algebras. Consider the algebra $\cA
^\prime=\cA \otimes _{k}k ^\prime.$ Then for any $n\geq 1$
$${\bf H}^{[n]}_{\cA ^\prime}={\bf H}^{[n]}_{\cA}
\times_kk^\prime.$$
\end{prop}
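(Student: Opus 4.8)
The plan is to reduce this statement, $\mathbf{H}^{[n]}_{\cA'} = \mathbf{H}^{[n]}_{\cA} \times_k k'$, to the already-established base-change result for based Hilbert schemes, namely Proposition \ref{extension-scalars-based-hilbert}, which asserts $\tilde{\mathbf{H}}^{[n]}_{\cA'} = \tilde{\mathbf{H}}^{[n]}_{\cA} \times_k k'$. Since $\mathbf{H}^{[n]}_{\cA}$ was constructed in Theorem \ref{representability-hilbert} precisely as the categorical quotient of $\tilde{\mathbf{H}}^{[n]}_{\cA}$ by the free $\mathrm{GL}_n$-action, the essential point is that this quotient construction commutes with extension of scalars from $k$ to $k'$.

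First I would observe that base change is compatible with the $\mathrm{GL}_n$-action: the action of $(\mathrm{GL}_n)_k$ on $\tilde{\mathbf{H}}^{[n]}_{\cA}$ described in Theorem \ref{representability-hilbert} (by $g(A_1,\dots,A_m,Y) = (gA_1g^{-1},\dots,gA_mg^{-1},gY)$) is defined by universal matrix formulas over $k$, so under the identification of Proposition \ref{extension-scalars-based-hilbert} it pulls back to the corresponding action of $(\mathrm{GL}_n)_{k'}$ on $\tilde{\mathbf{H}}^{[n]}_{\cA'}$. Likewise, the distinguished open cover by the subschemes $U^{\cA}_{f_1\dots f_{n-1}}$ is cut out by the determinants $D_{f_1\dots f_{n-1}}$, which are again universal over $k$; hence $(U^{\cA}_{f_1\dots f_{n-1}})' = U^{\cA'}_{f_1\dots f_{n-1}}$. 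Each such open is $\mathrm{GL}_n$-equivariantly isomorphic to $\mathrm{GL}_n \times \overline{U}$ via the map $\psi_{f_1\dots f_{n-1}}$, and because this trivialization is itself given by universal formulas, it too is compatible with base change.

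Next I would assemble these local statements into the global one. The scheme $\mathbf{H}^{[n]}_{\cA}$ is glued from the quotient pieces $\overline{U}_{f_1\dots f_{n-1}}$, with transition data supplied by the $\mathrm{GL}_n$-valued maps $g$ relating the local trivializations; all of this descends from $\tilde{\mathbf{H}}^{[n]}_{\cA}$ functorially. Applying $-\times_k k'$ to this gluing datum yields exactly the gluing datum constructing $\mathbf{H}^{[n]}_{\cA'}$ from $\tilde{\mathbf{H}}^{[n]}_{\cA'}$, using the identifications established in the previous paragraph. Since the formation of a Zariski-locally-trivial principal $\mathrm{GL}_n$-quotient commutes with flat (indeed arbitrary) base change on the base when the cover and trivializations are preserved, we obtain a canonical isomorphism $\mathbf{H}^{[n]}_{\cA'} \cong \mathbf{H}^{[n]}_{\cA} \times_k k'$.

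Alternatively, and perhaps more cleanly, one can argue purely on the level of representable functors: for any $k'$-scheme $T$, the set $\mathbf{M}^{[n]}_{\cA'}(T)$ of pairs $(M,v)$ for the sheaf of algebras $\cA'_T$ coincides with $\mathbf{M}^{[n]}_{\cA}(T)$, where $T$ is regarded as a $k$-scheme via $k \to k'$, because $\cA'_T = \cA_T$ as sheaves of $\cO_T$-algebras under this identification. This is the functor-of-points translation of the base-change identity and identifies the functor represented by $\mathbf{H}^{[n]}_{\cA'}$ with that represented by $\mathbf{H}^{[n]}_{\cA}\times_k k'$, whence the isomorphism by Yoneda. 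I expect the main obstacle to be organizational rather than substantive: one must check carefully that the descent and gluing of Theorem \ref{representability-hilbert} are genuinely given by base-change-stable universal data, so that applying $-\times_k k'$ does not disturb the categorical quotient. The functorial argument sidesteps this bookkeeping and is the route I would ultimately present.
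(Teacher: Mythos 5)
Your proposal is correct, and it really contains two proofs. The first one, via base change of $\tilde{\bf H}^{[n]}_{\cA}$ and of the $\mathrm{GL}_n$-action, is essentially the paper's own argument: the paper invokes Proposition \ref{extension-scalars-based-hilbert}, observes that the $\mathrm{GL}_n\times_k k'$-action on $\tilde{\bf H}^{[n]}_{\cA'}$ is the extension of scalars of the action on $\tilde{\bf H}^{[n]}_{\cA}$, and concludes from the fact that both schemes in question are bases of Zariski locally trivial principal $\mathrm{GL}_n$-bundles with the same total space and action; your explicit tracking of the cover $U^{\cA}_{f_1\dots f_{n-1}}$ and the gluing data is a more detailed rendering of that same step. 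The second, functorial argument --- the one you say you would ultimately present --- is genuinely different from the paper and cleaner: for a $k'$-scheme $T$, regarded as a $k$-scheme via $k\to k'$, one has a canonical isomorphism $\cA'_T\cong\cA_T$ of sheaves of $\cO_T$-algebras, since $\cO_T\otimes_{k'}(\cA\otimes_k k')\cong\cO_T\otimes_k\cA$; hence ${\bf M}^{[n]}_{\cA'}(T)={\bf M}^{[n]}_{\cA}(T)=\Hom_{\Sch_k}(T,{\bf H}^{[n]}_{\cA})=\Hom_{\Sch_{k'}}(T,{\bf H}^{[n]}_{\cA}\times_k k')$, functorially in $T$, and Yoneda gives the isomorphism. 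This route uses only representability (Theorem \ref{representability-hilbert}), which holds over the arbitrary ground ring $k'$, and bypasses both Proposition \ref{extension-scalars-based-hilbert} and all the principal-bundle bookkeeping; as a bonus, the identical argument applied to triples $(M,v,B)$ reproves Proposition \ref{extension-scalars-based-hilbert} itself, whereas the paper proves that statement by hand from the explicit construction of $\tilde{\bf H}^{[n]}_{\cA}$. What the paper's geometric approach buys in exchange is the explicit identification of the $\mathrm{GL}_n$-equivariant data under base change, which is reused elsewhere (e.g.\ in the discussion of the universal bundle $\cM$ and the line bundle $\cL$).
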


\begin{proof} By Proposition \ref{extension-scalars-based-hilbert}
we have $\tilde{\bf{H}}^{[n]}_{\cA
^\prime}=\tilde{\bf{H}}^{[n]}_{\cA} \times_kk^\prime.$ Also the
$\mathrm{GL}_n\times _kk^\prime$-action on
$\tilde{\bf{H}}^{[n]}_{\cA ^\prime}$ is obtained from the
$\mathrm{GL}_n$-action on $\tilde{\bf{H}}^{[n]}_{\cA }$ by
extending scalars from $k$ to $k^\prime .$ Since
$\tilde{\bf{H}}^{[n]}_{\cA }$ (resp. $\tilde{\bf{H}}^{[n]}_{\cA
^\prime}$) is a Zariski locally trivial principal
$\mathrm{GL}_n$-bundle over ${\bf{H}}^{[n]}_{\cA }$ (resp.
$\mathrm{GL}_n\times _kk^\prime $-bundle over ${\bf{H}}^{[n]}_{\cA
^\prime}$) we conclude that the scheme ${\bf{H}}^{[n]}_{\cA
^\prime}$ is obtained from ${\bf{H}}^{[n]}_{\cA }$ by extending
scalars from $k$ to $k^\prime .$
\end{proof}

\begin{remark} \label{closed-embedding-for-H}
Let $\phi :\cA \to \cB$ be a surjective homomorphism
of (finitely generated associative) $k$-algebras. This induces an obvious
morphism of functors $\phi _*:{\bf M}^{[n]}_{\cB}\to {\bf
M}^{[n]}_{\cA}$ by restriction of scalars from $\cB$ to $\cA.$ This
morphism of functors corresponds to a morphism of representing
schemes $\phi _*:{\bf H}^{[n]}_{\cB}\to {\bf H}^{[n]}_{\cA}.$ It
follows easily from the description of the scheme ${\bf  H}^{[n]}$ in
Theorem \ref{representability-hilbert} and from Remark
\ref{surjection=closed-embedding} that $\phi _*$ is a closed
embedding.
\end{remark}

\section{Projectivity of ${\bf H}^{[n]}_{\cA}$}

The universal bundle $\cM$ defines a cohomology class $\Delta \in
H^1({\bf H}^{[n]}_{\cA},\mathrm{GL}_n).$ We can explicitly
describe a 1-cocycle corresponding to $\Delta .$ Namely recall
that $\tilde{{\bf H}}^{[n]}_{\cA}$ has an affine open covering
consisting of $\mathrm{GL}_n$-invariant subsets
$U^{\cA}_{f_1...f_{n-1}}.$ Denote the collection
$f_1,...,f_{n-1}\subset \cA$ by ${\bf f}$ and put $U_{\bf
f}=U^{\cA}_{f_1...f_{n-1}},$ $ \overline{U}_{\bf f}=\pi (U_{\bf
f}),$ $M_{\bf f}=(Y,f_1(A_1,...,A_m)Y,...,f_{n-1}(A_1,...,A_m)Y),$
 $D_{\bf f}=D_{f_1...f_{n-1}}=\det M_{\bf f}.$ The affine open
 subsets $\overline{U}_{\bf f}$ cover ${\bf H}^{[n]}_{\cA}.$

Recall that for every $f(x_1,...,x_m)\in \cA$ the element ${\bf
e}\bullet f(A_1,...,A_m)Y\in M_0$ is $\mathrm{GL}_n$-invariant and
hence descends to a section of $\cM.$ Moreover $\cM$ is generated
by sections of this form. Let us define a trivialization
$$\cM \mid _{\overline{U}_{\bf f}}\stackrel{\sim}{\to}
\cO _{\overline{U}_{\bf f}}^{\oplus n},\quad {\bf e}\bullet
f(A_1,...,A_m)Y\mapsto M^{-1}_{\bf f}f(A_1,...,A_m)Y$$ where on
RHS we have the product of the matrix $M^{-1}_{\bf f}$ and the
column vector $f(A_1,...,A_m)Y.$ Note that this product consists
of $\mathrm{GL}_n$-invariant functions and hence is indeed an
element of $\cO _{\overline{U}_{\bf f}}^{\oplus n}.$ Therefore the
cohomology class $\Delta $ is represented by the 1-cocycle
$$\{ M^{-1}_{{\bf f}^\prime}\cdot M_{\bf f}:
\overline{U}_{\bf f}\cap \overline{U}_{{\bf f}^\prime}\to
\mathrm{GL}_n\}$$

Consider the line bundle $\cL=\bigwedge ^n\cM.$ It defines the
cohomology class $\delta \in H^1({\bf H}^{[n]}_{\cA},\bbG _m)$
which is the image of $\Delta $ under the map $H^1({\bf
H}^{[n]}_{\cA},\mathrm{GL}_n)\to H^1({\bf H}^{[n]}_{\cA},\bbG _m)$
induced by the homomorphism of $k$-group schemes $\det
:\mathrm{GL}_n \to \bbG _m.$ The class $\delta$ is described by
the cocycle
$$\{ \frac{D_{\bf f}}{D_{{\bf f}^\prime }}:
\overline{U}_{\bf f}\cap \overline{U}_{{\bf f}^\prime} \to \bbG
_m\}$$

For each $p\geq 0$ define the $k$-module
$$T_p=\{ h\in \Gamma (\tilde{\bf H}^{[n]}_{\cA},
\cO _{\tilde{\bf H}^{[n]}_{\cA}})\mid \forall g\in \mathrm{GL}_n,\
g(h)=(\det g)^p h\}$$ In particular $T_0=\Gamma (\tilde{\bf
H}^{[n]}_{\cA}, \cO _{\tilde{\bf H}^{[n]}_{\cA}})^{\mathrm{GL}_n}=
\Gamma ({\bf H}^{[n]}_{\cA}, \cO _{{\bf H}^{[n]}_{\cA}}).$ Also
notice that for each ${\bf f}\subset  \cA$ the function $D_{\bf
f}\in T_1.$
 Let $T=\oplus _{p\geq
0}T_p$ be the corresponding graded $k$-algebra.

\begin{lemma} \label{T=S}
There is a natural isomorphism of graded $k$-algebras
$$T_\bullet \simeq \bigoplus _{p\geq 0}H^0({\bf H}^{[n]}_{\cA},\cL^{\otimes
p}).$$ In particular the functions $D_{\bf f}$ correspond to
 sections of $\cL.$
\end{lemma}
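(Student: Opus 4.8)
The plan is to pull everything back along the principal bundle $\pi\colon\tilde{\bf H}^{[n]}_{\cA}\to{\bf H}^{[n]}_{\cA}$ and to reduce the computation of $H^0({\bf H}^{[n]}_{\cA},\cL^{\otimes p})$ to the concrete transformation law defining $T_p$ on the quasi-affine scheme $\tilde{\bf H}^{[n]}_{\cA}$. The key point is that, since $\pi^*\cM\simeq M_0$ and exterior powers commute with pullback, we have $\pi^*\cL=\bigwedge^n\pi^*\cM=\bigwedge^n M_0$; and because $M_0=k^n\otimes_k\cO_{\tilde{\bf H}^{[n]}_{\cA}}$ is free with basis $e_1,\dots,e_n$, the line bundle $\pi^*\cL$ is free of rank one on the single generator $\omega:=e_1\wedge\cdots\wedge e_n$. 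First I would compute the $\mathrm{GL}_n$-equivariant structure carried by $\omega$. From the action $g({\bf e}\bullet\alpha^t)={\bf e}g^{-1}\bullet g(\alpha^t)$ fixed in the proof of Theorem \ref{representability-hilbert}, together with the fact that $g$ acts trivially on the constant standard columns, one finds $g(e_l)=\sum_i(g^{-1})_{il}e_i$, and hence $g(\omega)=(\det g)^{-1}\omega$. Consequently an arbitrary section $h\cdot\omega^{\otimes p}$ of $\pi^*\cL^{\otimes p}$, with $h\in\Gamma(\tilde{\bf H}^{[n]}_{\cA},\cO)$, transforms by $g(h\,\omega^{\otimes p})=g(h)(\det g)^{-p}\,\omega^{\otimes p}$.

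Next I would invoke the descent statement already used for $\cM$ in Lemma \ref{descent-of-universal-bundle} (Proposition \ref{equivariant-sheaves}): $\mathrm{GL}_n$-invariant sections of an equivariant bundle on $\tilde{\bf H}^{[n]}_{\cA}$ descend to sections on the quotient, yielding a natural identification
$$H^0({\bf H}^{[n]}_{\cA},\cL^{\otimes p})\;\cong\;\Gamma(\tilde{\bf H}^{[n]}_{\cA},\pi^*\cL^{\otimes p})^{\mathrm{GL}_n}.$$
By the transformation law above, the section $h\,\omega^{\otimes p}$ is invariant exactly when $g(h)=(\det g)^p h$ for all $g\in\mathrm{GL}_n$, that is, exactly when $h\in T_p$; so $h\mapsto h\,\omega^{\otimes p}$ defines a $k$-module isomorphism $T_p\xrightarrow{\sim}H^0({\bf H}^{[n]}_{\cA},\cL^{\otimes p})$. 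To promote these to an isomorphism of graded $k$-algebras I would check compatibility with products: under the multiplication $\cL^{\otimes p}\otimes\cL^{\otimes q}\to\cL^{\otimes(p+q)}$ one has $\omega^{\otimes p}\otimes\omega^{\otimes q}\mapsto\omega^{\otimes(p+q)}$, so the map carries the product $h\,h'\in T_{p+q}$ to the product of the corresponding sections and sends $1\in T_0=\Gamma({\bf H}^{[n]}_{\cA},\cO)$ to the unit.

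Finally, for the ``in particular'' clause I would unwind the isomorphism on $D_{\bf f}\in T_1$. Over $\overline{U}_{\bf f}$ the sections ${\bf e}\bullet Y,\ {\bf e}\bullet f_1(A_1,\dots,A_m)Y,\dots,\ {\bf e}\bullet f_{n-1}(A_1,\dots,A_m)Y$ form a basis of $\cM$ by Lemma \ref{descent-of-universal-bundle}(c), and in the basis $e_1,\dots,e_n$ their coordinate columns are precisely the columns of $M_{\bf f}$; hence their wedge equals $\det(M_{\bf f})\,\omega=D_{\bf f}\,\omega$, the trivializing section of $\cL$ over $\overline{U}_{\bf f}$. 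Thus $D_{\bf f}$ corresponds to the global section of $\cL$ that restricts to this wedge on $\overline{U}_{\bf f}$, and comparing on overlaps recovers the transition cocycle $\{D_{\bf f}/D_{{\bf f}^\prime}\}$ of $\delta$. I expect the only genuinely delicate point to be the descent identification $H^0({\bf H}^{[n]}_{\cA},\cL^{\otimes p})\cong\Gamma(\tilde{\bf H}^{[n]}_{\cA},\pi^*\cL^{\otimes p})^{\mathrm{GL}_n}$ --- that invariant global sections of the pullback genuinely coincide with global sections downstairs, and that forming $\mathrm{GL}_n$-invariants commutes with taking global sections. This rests on $\pi$ being a Zariski-locally-trivial principal $\mathrm{GL}_n$-bundle, so it can be checked on the trivializing cover $\{\overline{U}_{\bf f}\}$, where it is elementary. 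Everything else is the bookkeeping of the $\det$-twist, whose exponent and sign I would verify carefully against the convention $g({\bf e}\bullet\alpha^t)={\bf e}g^{-1}\bullet g(\alpha^t)$.
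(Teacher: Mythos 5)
Your proposal is correct, but it takes a different route from the paper's. The paper argues by hand with the affine cover $\{\overline{U}_{\bf f}\}$ and the explicit cocycle $\{D_{\bf f}/D_{{\bf f}'}\}$: given $h\in T_p$, each ratio $h/D_{\bf f}^p$ is $\mathrm{GL}_n$-invariant on $U_{\bf f}$, hence descends to $\overline{U}_{\bf f}$, and the collection $\{h/D_{\bf f}^p\}$ is a \v{C}ech datum for a section of $\cL^{\otimes p}$; surjectivity is then proved by reversing this, gluing the functions $\pi^*\gamma_{\bf f}\,D_{\bf f}^p$ into a single global $h\in T_p$. You instead invoke the equivariant-descent equivalence (Proposition \ref{equivariant-sheaves}) to identify $H^0({\bf H}^{[n]}_{\cA},\cL^{\otimes p})$ with $\Gamma(\tilde{\bf H}^{[n]}_{\cA},\pi^*\cL^{\otimes p})^{\mathrm{GL}_n}$ --- this identification is legitimate: it is exactly the statement that the adjunction unit $A\to\pi^{G}_*\pi^*A$ is an isomorphism, which is what Case 2 of the proof of that proposition establishes, and it is the same mechanism the paper already used to define $\cM=(\pi_*M_0)^{\mathrm{GL}_n}$ --- and then reduce everything to the weight computation $g(\omega)=(\det g)^{-1}\omega$ for $\omega=e_1\wedge\cdots\wedge e_n$, so that invariant sections of $\pi^*\cL^{\otimes p}$ are precisely $T_p\cdot\omega^{\otimes p}$. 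The two constructions produce the same isomorphism (over $\overline{U}_{\bf f}$ your $h\,\omega^{\otimes p}$ corresponds to the paper's $h/D_{\bf f}^p$, since the wedge of the basis sections equals $D_{\bf f}\,\omega$), but the emphasis differs: your argument is localization-free and makes the $\det$-twist conceptual, with the cocycle description falling out as a corollary; the paper's argument is more elementary, avoids re-invoking the equivariant equivalence for the twisted sheaves, and directly exhibits the \v{C}ech data and trivializations that are then reused in the proof of Theorem \ref{projectivity-for-free-algebra}(ii). The only point you should make fully explicit is the compatibility of equivariant structures, i.e.\ that $\pi^*\cL^{\otimes p}\simeq(\bigwedge^nM_0)^{\otimes p}$ as \emph{equivariant} sheaves (exterior powers and tensor powers commute with the descent equivalence); this is routine but it is the hinge on which your weight argument turns.
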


\begin{proof} Let $h\in T_p.$ For each ${\bf f}$ the
function $h/D_{\bf f}^p$ on $U_{\bf f}$ is
$\mathrm{GL}_n$-invariant, so it descends to a function on
$\overline{U}_{\bf f}.$ Clearly the collection $\{h/D_{\bf f}^p\in
\cO (\overline{U}_{\bf f})\}$ defines a global section of
$\cL^{\otimes p}$ and this induces an injective homomorphism of
graded algebras
$$\theta : T_{\bullet}\to \bigoplus _{p\geq
0}H^0({\bf H}^{[n]}_{\cA},\cL^{\otimes p}).$$ Let us show the
surjectivity of $\theta .$ Let $\gamma \in \Gamma ({\bf
H}^{[n]}_{\cA},\cL^{\otimes p}).$ Then $\gamma $ corresponds to a
collection of sections $\{\gamma _{\bf f}\in \cO (\overline{U}_{\bf
f})\}$ such that $\gamma _{{\bf f}^\prime}= \frac{D_{\bf
f}^p}{D^p_{{\bf f}^\prime }}\gamma _{\bf f}$ on $\overline{U}_{\bf
f}\cap \overline{U}_{{\bf f}^\prime}.$ This gives us a global
function $h$ on $\tilde{{\bf H}}^{[n]}_{\cA}$ such that $h\mid
_{U_{\bf f}}=\pi ^* \gamma _{\bf f}D^p_{\bf f}.$ Clearly $h\in T_p$
and $\theta (h)=\gamma .$

\end{proof}

The line bundle has a description in terms of the universal bundle
$\cM$ on ${\bf H}^{[n]}_{\cA}.$

\begin{lemma} \label{compatibility-of-Mu}
Let $\cB$ be another associative $k$-algebra and $\cA \to \cB$ be a
surjection. Denote by $j:{\bf H}^{[n]}_{\cB}\hookrightarrow {\bf
H}^{[n]}_{\cA}$ be the induces closed embedding. Let $\cM^{\cA}$ and
$\cM^{\cB}$ (resp. $\cL^{\cA}$ and $\cL^{\cB}$) be the corresponding
universal bundles (resp. line bundles) on ${\bf H}^{[n]}_{\cA}$ and
${\bf H}^{[n]}_{\cB}$ respectively. Then $j^*M^{\cA}_u=M^{\cB}_u$
and $j^*\cL^{\cA}=\cL^{\cB}.$
\end{lemma}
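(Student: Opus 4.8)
The plan is to deduce both identities from the universal properties of Theorem~\ref{representability-hilbert} rather than from the explicit coordinate description, since the statement is really a naturality assertion. Recall that on ${\bf H}^{[n]}_{\cA}$ the pair $(\cM^{\cA},v^{\cA}_u)$ is the universal object representing ${\bf M}^{[n]}_{\cA}$, and likewise $(\cM^{\cB},v^{\cB}_u)$ is universal for ${\bf M}^{[n]}_{\cB}$. The crucial observation is how $j$ was defined: by Remark~\ref{closed-embedding-for-H}, $j$ is exactly the morphism classifying the element of ${\bf M}^{[n]}_{\cA}({\bf H}^{[n]}_{\cB})$ obtained from the universal pair $(\cM^{\cB},v^{\cB}_u)$ by restriction of scalars along the surjection $\phi:\cA\to\cB$.

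First I would treat the module statement. By the universal property of ${\bf H}^{[n]}_{\cA}$, for any morphism $g:Y\to {\bf H}^{[n]}_{\cA}$ the pullback $g^*(\cM^{\cA},v^{\cA}_u)$ is precisely the pair on $Y$ classified by $g$. Taking $g=j$ and comparing with the description of $j$ just recalled, one gets that $j^*(\cM^{\cA},v^{\cA}_u)$ is the restriction of scalars of $(\cM^{\cB},v^{\cB}_u)$. Since restriction of scalars along $\phi$ leaves the underlying $\cO$-module unchanged and merely reinterprets the action, this is exactly the assertion $j^*M^{\cA}_u=M^{\cB}_u$ (the right-hand side being $\cM^{\cB}$ equipped with its $\cA$-action through $\phi$), with $j^*v^{\cA}_u=v^{\cB}_u$ under this identification.

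The line bundle statement then follows formally. As $\cL=\bigwedge^n\cM$ in each case and the formation of the top exterior power of a locally free sheaf commutes with pullback, we have $j^*\cL^{\cA}=j^*\bigwedge^n\cM^{\cA}=\bigwedge^n j^*\cM^{\cA}=\bigwedge^n\cM^{\cB}=\cL^{\cB}$, where the middle step uses the module statement and the fact that $\bigwedge^n$ depends only on the underlying $\cO$-module, so the difference between the $\cA$- and $\cB$-actions is immaterial.

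The argument is essentially a formality once the universal properties are in place; the only point demanding care is the identification of $j$ with the classifying map of the restricted universal pair in Remark~\ref{closed-embedding-for-H}, together with the observation that restriction of scalars does not alter the underlying $\cO$-module. Alternatively, one could argue on the based Hilbert schemes: by Remark~\ref{surjection=closed-embedding} the free module $M_0$ restricts $\mathrm{GL}_n$-equivariantly from $\tilde{\bf H}^{[n]}_{\cA}$ to $\tilde{\bf H}^{[n]}_{\cB}$, and since the square relating the two principal $\mathrm{GL}_n$-bundles $\pi$ via $j$ and its lift is cartesian, uniqueness of descent (Proposition~\ref{equivariant-sheaves}) gives $j^*\cM^{\cA}=\cM^{\cB}$. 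On this route the step requiring verification is the cartesianness of that square, which holds because the lift is a $\mathrm{GL}_n$-equivariant closed embedding of principal bundles.
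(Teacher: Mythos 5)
Your argument is correct, but your primary route is genuinely different from the paper's. The paper disposes of the lemma in one line by exactly what you offer as the \emph{alternative}: the commutative square with the lift $\tilde j:\tilde{\bf H}^{[n]}_{\cB}\hookrightarrow\tilde{\bf H}^{[n]}_{\cA}$ over $j$, combined with the definition of $\cM$ as the unique descent (Proposition \ref{equivariant-sheaves}) of the equivariant free module $M_0$ along $\pi$. Your main route instead is a Yoneda argument: by Remark \ref{closed-embedding-for-H}, $j$ is the map classifying the restriction of scalars along $\phi$ of the universal pair $(\cM^{\cB},v_u^{\cB})$, while by the universal property of ${\bf H}^{[n]}_{\cA}$ it also classifies $j^*(\cM^{\cA},v_u^{\cA})$; comparing the two, and noting that restriction of scalars leaves the underlying $\cO$-module untouched, gives $j^*\cM^{\cA}\cong\cM^{\cB}$, after which $j^*\cL^{\cA}\cong\cL^{\cB}$ follows since $\bigwedge^n$ commutes with pullback of locally free sheaves. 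This functorial argument is arguably cleaner in that it never touches the based Hilbert scheme or equivariant descent; the paper's route, on the other hand, yields for free the stronger ``upstairs'' statement that the equivariant modules themselves match, $\tilde j^*M_0^{\cA}\cong M_0^{\cB}$, directly from the coordinate construction. One correction to your alternative sketch: cartesianness of the square is not the step that needs verifying, and in fact is not needed at all. Mere commutativity gives $\pi_{\cB}^*\,j^*\cM^{\cA}\cong\tilde j^*\pi_{\cA}^*\cM^{\cA}\cong\tilde j^*M_0^{\cA}\cong M_0^{\cB}\cong\pi_{\cB}^*\cM^{\cB}$ as $\mathrm{GL}_n$-equivariant sheaves, and then full faithfulness of $\pi_{\cB}^*$ from Proposition \ref{equivariant-sheaves} concludes; the square \emph{is} cartesian (an equivariant morphism of principal $\mathrm{GL}_n$-bundles over the same base is an isomorphism), but the real point to check on that route is the equivariant identification $\tilde j^*M_0^{\cA}\cong M_0^{\cB}$, which is immediate from Remark \ref{surjection=closed-embedding} and the construction of $M_0$.
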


\begin{proof} This follows from the commutativity of the natural diagram
$$\begin{array}{ccc}
\tilde{\bf H}^{[n]}_{\cB} & \hookrightarrow & \tilde{\bf H}^{[n]}_{\cA} \\
\downarrow & & \downarrow \\
{\bf H}^{[n]}_{\cB} & \stackrel{j}{\hookrightarrow} & {\bf H}^{[n]}_{\cA}
\end{array}
$$
and the definition of the universal bundle $\cM.$
\end{proof}

\begin{theo} \label{projectivity-for-free-algebra}
Assume that the ground ring $k$
is a UFD and a finitely generated algebra over a field, and let
$F_m=k\langle x_1,...,x_m\rangle$ be the free associative algebra.
Fix $n\geq 1$ and consider the graded $k$-algebra $S_\bullet
=S_\bullet ^{F_m}=\oplus _{p\geq
0}H^0({\bf H}^{[n]}_{F_m},\cL^{\otimes p}).$ Then

(i) The graded $k$-algebra $S_\bullet $ is finitely generated.
In particular the $k$-algebra $\cO({\bf H}^{[n]}_{F_m})=S_0$ is
finitely generated.

(ii) The line bundle $\cL$ on ${\bf H}^{[n]}_{F_m}$ is ample, i.e.
for some $n_0>0$ the sections $H^0({\bf H}^{[n]}_{F_m},\cL^{\otimes
n_0})$ define a closed embedding
$${\bf H}^{[n]}_{F_m}\hookrightarrow {\bf P}^N_{\cO({\bf H}^{[n]}_{F_m})}$$
for some $N\geq 0 .$
\end{theo}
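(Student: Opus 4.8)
The plan is to prove both parts by exhibiting ${\bf H}^{[n]}_{F_m}$ as a closed subscheme of a more manageable space whose projectivity is already understood, exploiting the concrete description of the based Hilbert scheme. The key observation is that the free algebra $F_m$ surjects onto any finitely generated $k$-algebra $\cA$, but here I want to go the other way: by Remark \ref{closed-embedding-for-H} and Lemma \ref{compatibility-of-Mu}, a surjection $\cA\to\cB$ induces a closed embedding ${\bf H}^{[n]}_{\cB}\hookrightarrow{\bf H}^{[n]}_{\cA}$ compatible with the line bundles $\cL$. So the natural strategy is to identify a universal or model situation. Concretely, I would first analyze the ring $S_\bullet=\bigoplus_{p\ge0}H^0({\bf H}^{[n]}_{F_m},\cL^{\otimes p})$ via the isomorphism $S_\bullet\simeq T_\bullet$ of Lemma \ref{T=S}, where $T_p$ consists of $\det^p$-semi-invariant global functions on $\tilde{\bf H}^{[n]}_{F_m}$. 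This translates both assertions into statements about the graded ring of semi-invariants $T_\bullet=\bigoplus_p T_p$ for the $\mathrm{GL}_n$-action on the quasi-affine scheme $\tilde{\bf H}^{[n]}_{F_m}\subset \Rep^{[n]}_{F_m}\times V = W\times V$.

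The heart of the argument is then finite generation of $T_\bullet$. Here I would use that $\tilde{\bf H}^{[n]}_{F_m}$ is an open subscheme of the affine space $W\times V=\Spec k[t^s_{ij},y_l]$ (since for the free algebra there are no relations, $\Rep^{[n]}_{F_m}=W$), and that its complement $Z$ is the common zero locus of the determinants $D_{\bf f}$. A clean way to package the semi-invariants is to pass to the affine cone: consider the blowup/Rees-type construction, or better, observe that an element $h\in T_p$ is a global regular function on the quasi-affine $\tilde{\bf H}^{[n]}_{F_m}$ transforming by $\det^p$. Since each $D_{\bf f}\in T_1$ is invertible precisely on $U_{\bf f}$, and the $U_{\bf f}$ cover $\tilde{\bf H}^{[n]}_{F_m}$, one has $h/D_{\bf f}^p\in\cO(U_{\bf f})^{\mathrm{GL}_n}=\cO(\overline U_{\bf f})$, which by Lemma \ref{free-action-lemma} is an affine coordinate ring. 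The plan is to prove that $T_\bullet$ is a finitely generated algebra over $T_0=\cO({\bf H}^{[n]}_{F_m})$ by showing it is generated in degree one (or in bounded degree) by finitely many of the $D_{\bf f}$ together with $T_0$, and then that $T_0$ itself is finitely generated over $k$. For the latter I would invoke the hypothesis that $k$ is a finitely generated algebra over a field together with noetherianity, and reduce $T_0=\cO({\bf H}^{[n]}_{F_m})$ to a ring of $\mathrm{GL}_n$-invariants of the finitely generated ring $\cO(\tilde{\bf H}^{[n]}_{F_m})$; since $\mathrm{GL}_n$ is reductive, classical invariant theory (the Hilbert--Nagata finiteness theorem) gives finite generation. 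The semi-invariant rings $T_p$ are finitely generated modules over $T_0$ for the same reason, and patching over the affine cover shows $T_\bullet$ is a finitely generated graded $T_0$-algebra. This establishes (i).

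For (ii) I would argue that $\cL$ is ample by the standard criterion: the sections $D_{\bf f}\in H^0(\cL)$ have the property that the nonvanishing loci $\{D_{\bf f}\neq0\}=\overline U_{\bf f}$ form an affine open cover of ${\bf H}^{[n]}_{F_m}$. By Lemma \ref{T=S} and part (i), $S_\bullet$ is finitely generated, so $\Proj S_\bullet$ is a projective scheme over $S_0=\cO({\bf H}^{[n]}_{F_m})$. The natural morphism ${\bf H}^{[n]}_{F_m}\to\Proj S_\bullet$ sends $\overline U_{\bf f}$ into the basic affine open $D_+(D_{\bf f})$, and on each such chart it is given on coordinate rings by the identification $\cO(\overline U_{\bf f})\simeq (S_\bullet[D_{\bf f}^{-1}])_0$, which the semi-invariant description makes an isomorphism. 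Hence the morphism is an isomorphism onto its image, which is a locally closed (in fact, I would show closed, using properness of $\Proj$) subscheme of $\Proj S_\bullet\hookrightarrow {\bf P}^N_{S_0}$ for a finite generating set. Choosing $n_0$ so that $\cL^{\otimes n_0}$ is very relatively ample yields the desired closed embedding into ${\bf P}^N_{\cO({\bf H}^{[n]}_{F_m})}$.

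I expect the main obstacle to be the finite generation in part (i), specifically controlling the ring of semi-invariants $T_\bullet$ globally rather than chart-by-chart. The subtlety is that $\tilde{\bf H}^{[n]}_{F_m}$ is only \emph{quasi}-affine, so $\cO(\tilde{\bf H}^{[n]}_{F_m})$ need not be finitely generated a priori, and the $\mathrm{GL}_n$-action is free but not obviously nice enough for a naive application of Nagata's theorem to the global functions. The clean resolution is to work with the affine cone $\Spec T_\bullet$ and reinterpret $T_\bullet$ as the invariants of the larger reductive group $\mathrm{GL}_n\times\bbG_m$ acting on a genuinely affine scheme obtained from $W\times V$ by adjoining an inverse to a single determinant or by the Rees algebra of the ideal generated by the $D_{\bf f}$; once the setup is genuinely affine and the acting group reductive, Hilbert--Nagata applies and the gluing across the charts $\overline U_{\bf f}$ becomes routine. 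Getting this reduction right — in particular verifying that the relevant ambient ring is finitely generated and that taking $\det^p$-semi-invariants is compatible with the affine cone construction — is where the real work lies; everything downstream (ampleness and the closed embedding) is then a formal consequence of finite generation of $S_\bullet$ together with the affine open cover by the $\overline U_{\bf f}$.
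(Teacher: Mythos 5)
Your outline shares the paper's skeleton --- translate $S_\bullet$ into the semi-invariant ring $T_\bullet$ via Lemma \ref{T=S}, prove finite generation by reductivity, then obtain the embedding from the affine cover $\{\overline{U}_{\bf f}\}$ --- and you correctly locate the main difficulty: $\tilde{\bf H}^{[n]}_{F_m}$ is only quasi-affine, so $\Gamma (\tilde{\bf H}^{[n]}_{F_m},\cO)$ is not a priori finitely generated and one cannot naively apply an invariant-theoretic finiteness theorem to it. But your proposed repairs do not close this gap. Inverting a single determinant $D_{\bf f}$ produces only the chart $U_{\bf f}$, whose (semi-)invariants recover $\cO (\overline{U}_{\bf f})$ and sections of $\cL^{\otimes p}$ over one chart, not global sections; and you establish no relation between $T_\bullet$ and the Rees algebra of the ideal generated by the $D_{\bf f}$ --- indeed, without first knowing that global functions on $\tilde{\bf H}^{[n]}$ are polynomials, there is no reason an element of $T_p$ should lie in any power of that ideal. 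The missing idea is precisely the paper's Lemma \ref{equality-of-global-sections}: for $n,m\geq 2$ the restriction map $k[t^s_{ij},y_l]=\Gamma (W\times V,\cO)\to \Gamma (\tilde{\bf H}^{[n]},\cO)$ is an isomorphism, proved by exhibiting two non-proportional determinants $D_{\bf f},D_{{\bf f}'}$ and using that $k[t^s_{ij},y_l]$ is a UFD (a rational function regular on both $U_{\bf f}$ and $U_{{\bf f}'}$ must be a polynomial). This is exactly where the UFD hypothesis of the theorem enters, and your proposal never uses it; the exceptional cases $n=1$ and $m=1$, where this lemma fails, also need the separate easy treatment the paper gives them. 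Once the lemma is in place, $T_p$ is the space of $\det{}^p$-semi-invariant polynomials, and the paper converts semi-invariants into honest invariants by letting $\mathrm{GL}_n$ act on the affine space $W\times V\times \Spec k[t]$ with $g(t)=\det (g)^{-1}t$, then invokes Seshadri's theorem \cite{Se} (not classical Hilbert--Nagata over a field: $k$ is only assumed finitely generated over a field), so that $B^{\mathrm{GL}_n}=\bigoplus _p T_p\cdot t^p$ is finitely generated.

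There is a second gap in your part (ii). Your plan yields (at best) an open immersion ${\bf H}^{[n]}\hookrightarrow \Proj S_\bullet$, with image $\bigcup _{\bf f}D_+(D_{\bf f})$, and you propose to conclude that this image is closed ``using properness of $\Proj$.'' That step fails: an open subscheme of a proper scheme is not closed in general, and showing that the image is closed (equivalently, that the $D_+(D_{\bf f})$ exhaust $\Proj S_\bullet$) is essentially the projectivity statement being proved, so the argument as written is circular. The paper avoids this by first passing to a Veronese subalgebra via Lemma \ref{generated-by-degree-one} (for a finitely generated graded algebra, some $\bigoplus _p A_{pn_0}$ is generated by $A_{n_0}$ over $A_0$), and then verifying the hypotheses of the closed-embedding criterion (the variant of \cite{Ha}, II, Proposition 7.2): each $\overline{U}_{\bf f}=\{D_{\bf f}^{n_0}\neq 0\}$ is affine, and the map $S_0[y_0,...,y_N]\to \cO (\overline{U}_{\bf f})$, $y_j\mapsto s_j/D_{\bf f}^{n_0}$, is surjective, because any $\rho \in \cO (\overline{U}_{\bf f})$ pulls back to $h/D_{\bf f}^{n_0p'}$ with $h\in S_{n_0p'}$ (using Lemma \ref{equality-of-global-sections} again), and $h$ is then a polynomial in the $s_j$ with coefficients in $S_0$ by the Veronese generation. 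Without this degree-bounding step and the surjectivity on chart coordinate rings, ampleness and the closed embedding do not follow from finite generation of $S_\bullet$ alone.
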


\begin{proof} (i) We will denote ${\bf H}^{[n]}_{F_m}$
simply by ${\bf H}^{[n]}$, etc. In case $n=1,$ ${\bf H}^{[1]}=Spec
k[t^1,...,t^m]$ and the line bundle $\cL$ is trivial (Example
\ref{example-2}), hence the theorem holds with $N=0.$ So we may assume that
$n\geq 2.$

Also let us first consider the case $m=1,$ so that $F_1=k[x]$ - the
commutative polynomial ring over $k.$ Choose
$f_1=x,f_2=x^2,...,f_{n-1}=x^{n-1}\in F_1$ and consider
the corresponding affine open subset $U_{x...x^{n-1}}\subset
W\times V.$

\begin{lemma} We have the equality $U_{x...x^{n-1}}=\tilde{{\bf H}}^{[n]}.$
\end{lemma}

\begin{proof} Indeed, by definition $U_{x...x^{n-1}}\subset \tilde{{\bf H}}^{[n]}$
and both are open subschemes in $W\times V.$ Hence it suffices to show that they
have the same points. This is a consequence of the following simple
observation: let ${\bf F}$ be a field, $M$ - an $n$-dimensional ${\bf F}$-vector
space, $m\in M$ and $B\in \End _{\bf F}(M).$ We consider $M$ as a
${\bf F}[B]$-module in the obvious way. Then ${\bf F}[B]m=M$ if and only
if the ${\bf F}$-span of $m,Bm,...B^{n-1}m$ is equal to $M.$
\end{proof}

It follows that for $m=1,$ we have $\tilde{{\bf H}}^{[n]}=
Gl _n\times {\bf H}^{[n]}$ and ${\bf H}^{[n]}$ is an affine scheme.
Also the line bundle $\cL$ on ${\bf H}^{[n]}$ is trivial, hence
the theorem holds with $N=0.$ So we may assume that $m\geq 2.$

\begin{lemma} \label{equality-of-global-sections}
If $n,m\geq 2$ the restriction map
$\Gamma (W\times V,\cO _{W\times V})\to
\Gamma (\tilde{\bf H}^{[n]},\cO _{\tilde{\bf H}^{[n]}})$ is an isomorphism.
\end{lemma}

\begin{proof} The ring $\Gamma (W\times V,\cO _{W\times
V})=k [t^s_{ij},y_l]$ is a UFD. Recall that the
scheme $\tilde{\bf H}^{[n]}$ is the union of affine open subsets
$U_{f_1...f_{n-1}}$ defines by inverting the polynomials
$D_{f_1...f_{n-1}}.$ We can find two sets of elements
$\{f_1,...,f_{n-1}\},\{f^\prime _1,...,f^\prime _{n-1}\}\subset F_m$
such that the corresponding polynomials $D_{\bf f},$ $D_{{\bf f}^\prime}$
are not
equal to zero and are not proportional. It
follows that any element in the field of fractions of $\Gamma
(W\times V,\cO _{W\times V})$ which is a regular function on $U$ and
$U^\prime$ must be a polynomial. This proves the lemma.
\end{proof}

Now we can prove that the algebra $S_\bullet$ is finitely generated.
By Lemma \ref{T=S} there is an isomorphism of graded $k$-algebras
$S_\bullet \simeq T_\bullet,$ where
 $$T_p=\{ h\in \Gamma (\tilde{\bf H}^{[n]},
\cO _{\tilde{\bf H}^{[n]}})\mid \forall g\in \mathrm{GL}_n,\
g(h)=(\det g)^p h\}$$ By Lemma \ref{equality-of-global-sections}
 $$T_p=\{ h\in k[t^s_{ij},y_l]\mid \forall g\in \mathrm{GL}_n,\
g(h)=(\det g)^p h\}$$

Let $\mathrm{GL}_n$ act on $Spec k[t]$ by the formula $g(t)=\det
(g)^{-1} t.$ Consider the affine space $W\times V\times
\mathrm{Spec} k[t]$ with the diagonal $\mathrm{GL}_n$-action. Let
$B$ denote the (polynomial) algebra of global sections on $W\times
V\times \mathrm{Spec} k[t].$ Since the group scheme
$\mathrm{GL}_n$ is reductive and $k$ is of finite type over a
field we know by \cite{Se},Thm.2(i) that the $k$-algebra of
invariants $B^{\mathrm{GL}_n}$ is finitely generated. Note that
the algebra $B^{\mathrm{GL}_n}$ is graded
$$B^{\mathrm{GL}_n}=\bigoplus _{p\geq 0}T _p\cdot t^p.$$

Hence the graded algebra $B^{Gl _n}$ is isomorphic to $T_\bullet
\simeq S_\bullet .$ In particular $S_\bullet$ is also finitely
generated.

\begin{lemma} \label{generated-by-degree-one}
Let $k$ be a commutative ring  and $A_\bullet=\oplus _{p\geq 0}A_p$
be a finitely generated graded $k$-algebra. Then for some $m>0$ its
graded $A_0$-subalgebra $\oplus _{p\geq 0}A_{p m}$ is generated by
$A_m.$
\end{lemma}

\begin{proof} Let $x_1,\ldots,x_n$ be
generators of the $A_0$-algebra $A_\bullet $ of degrees
$d_1,\ldots,d_n$, and let $m = 2n D$, where $D = d_1\cdots d_n$.  We
claim that $A_m$ generates the $A_0$-algebra $\sum_p A_{pm}$.
Indeed, let $X = x_1^{a_1}\cdots x_n^{a_n}$ denote an element of
$A_{pm}$. Thus, $pm = \sum a_i d_i$.  We may assume $p\ge 2$, since
otherwise there is nothing to show. Let $y_j = x_j^{D/d_j}$, so all
$y_j$ have degree $D$. We claim that there exists an $n$-tuple of
non-negative integers $b_j$ such that $b_j D/d_j \le a_j$ for all
$j$ and $b_1+\cdots+b_n = 2n$.  If true, then $X$ can be written as
a product of $y_1^{b_1}\cdots y_n^{b_n}\in A_m$ and a monomial in
the $x_i$'s, and we win by induction on $p$.  To prove that the
$b_i$ exist, it suffices to show that $\sum \lfloor a_j d_j/D\rfloor
\ge 2n$. However,

$$\sum_j \lfloor a_j d_j/D\rfloor > \sum_j (a_j d_j/D-1) =
 pm/D - n \ge 2m/D - n = 3n > 2n.$$
\end{proof}

(ii) We now prove that the line bundle $\cL$ is ample.

Recall that for each ${\bf f}=\{f_1,...,f_{n-1}\}\subset  \cA$ the
function $D_{\bf f}$ on $\tilde{\bf H}^{[n]}$ belongs to $T_1\simeq
S_1,$ hence defines a global section of $\cL.$ We can choose a
finite set  $\{{\bf f}_i\},$ say
 such that the affine
open subsets $\overline{U}_{{\bf f}_i}=\{D_{{\bf f}_i}\neq 0\}$
cover ${\bf H}^{[n]}.$  Choose $n_0>0$ as in the above Lemma
\ref{generated-by-degree-one}. Suppose that the $S_0$-module
$H^0({\bf H}^{[n]},\cL^{\otimes n_0})$ is generated elements
$\{s_0,...,s_N\}$  and consider the corresponding morphism
$$\phi :{\bf H}^{[n]}\to {\bf P}^N_{S_0}$$ defined by
$H^0({\bf H}^{[n]},\cL^{\otimes n_0}).$ Notice that for each $i,$
$D^{n_0}_{{\bf f}_i}\in H^0({\bf H}^{[n]},\cL^{\otimes n_0}),$ hence
the morphism $\phi$ is well defined. We claim that $\phi $ is a
closed embedding.

\begin{lemma} Let $R$ be a commutative ring and $\phi :X\to
{\bf P}^{N}_R$ be a morphism of $R$-schemes corresponding to an
invertible sheaf $L$ on $X$ and sections $s_0,...,s_N\in \Gamma
(X,L).$ Suppose that there exists a subset $\{t_i\}\subset \{s_j\}$
such that

(i) each open subset $X_{t_i}=\{t_i\neq 0\}\subset X$ is affine;

(ii) for each $i,$ the map $R[y_0,...,y_N]\to \Gamma (X_{t_i},\cO
_{X_{t_i}})$ defined by $y_j\mapsto s_j/t_i$ is surjective.

Then $\phi $ is a closed embedding.
\end{lemma}

\begin{proof} This lemma is just a slight variation of
\cite{Ha},II,Proposition 7.2,
and the proof is the same.
\end{proof}

To apply this lemma  we may assume that the sections
$D^{n_0}_{{\bf f}_i}$'s are among the $s_j$'s. Pick one such
section $D^{n_0}_{\bf f}.$ As explained above the corresponding
open subset $\overline{U}_{\bf f}\subset {\bf H}^{[n]}$ is affine.
Choose $\rho \in \cO (\overline{U}_{{\bf f}}).$ Then $\pi
^{-1}(\rho )\in \cO (U_{{\bf f}}),$ and hence $\pi ^{-1}(\rho
)=h/D_{{\bf f}}^p$ for some $h\in \Gamma (\tilde{\bf H}^{[n]},\cO
_{\tilde{\bf H}^{[n]}})$ and $p\geq 0.$ We may assume that
$p=n_0p^\prime$ and so $h\in H^0({\bf H}^{[n]},\cL^{\otimes
n_0p\prime})$ (since $\pi^{-1}(\rho)$ is
$\mathrm{GL}_n$-invariant). But then $h$ is a polynomial in
$s_j$'s with coefficients in $S_0.$ This shows that the map
$S_0[s_0,...s_N]\to \cO (\overline{U}_{{\bf f}})$ defined by
$s_j\mapsto s_j/D^{n_0}_{\bf f}$ is surjective. Thus $\phi $ is a
closed embedding. This proves (ii) and the theorem.
\end{proof}

\begin{cor}\label{projectivity-for-general-algebra}
Let $k$ be as in Theorem
\ref{projectivity-for-free-algebra} and let
$\cA$ be a finitely generated associative $k$-algebra, say $\cA$ is a
quotient of $F_m.$ Fix $n\geq 1.$ Then for some $n_0>0$ the sections
$H^0({\bf H}^{[n]}_{\cA},\cL ^{\otimes n_0})$ define a closed embedding
$${\bf H}_{\cA}^{[n]}\hookrightarrow {\bf P}^N_{\cO({\bf H}^{[n]}_{F_m})}$$
for some $N\geq 0.$
\end{cor}

\begin{proof} Indeed, by Remark \ref{closed-embedding-for-H} the scheme
${\bf H}^{[n]}_{\cA}$ is a closed subscheme in ${\bf H}^{[n]}_{F_m}$ and
the line bundle $\cL ^{\cA}$ is the restriction of $\cL ^{F_m}$ (Lemma
\ref{compatibility-of-Mu}). Hence it remains to apply part (ii) of Theorem
\ref{projectivity-for-free-algebra}.
\end{proof}

\begin{remark} For a general $\cA$ as in Corollary
\ref{projectivity-for-general-algebra} we cannot prove that the $k$-algebra
$\cO ({\bf H}^{[n]}_{\cA})$ (or the $k$-algebra
$\cO (\tilde{{\bf H}}^{[n]}_{\cA})$) is finitely generated. This is the
reason for the appearance of the projective space
${\bf P}^N_{\cO({\bf H}^{[n]}_{F_m})}$ in that corollary.
\end{remark}

\subsection{The tangent sheaf of the Hilbert scheme} Assume that the base
ring $k$ is an algebraically closed field and let $\cA$ be a
finitely generated $k$-algebra. All schemes and morphism of schemes
are assumed to be defined over $k.$ Fix $n\geq 1.$ Put ${\bf H}={\bf
H}^{[n]}_{\cA}$ and consider the canonical short exact sequence of
$\cA_{\bf H}$-modules
\begin{equation}\label{fundam-short-exact-seq}
0\to \cI \to \cA_{\bf H}\to \cM \to 0
\end{equation}
where $\cM$ is the universal bundle and $\cI$ is the "universal left
ideal of codimension $n$ in $\cA$". In particular this is a short
exact sequence of quasi-coherent sheaves on ${\bf H}.$ Since the
$\cO _{\bf H}$-module $\cM$ is locally free this a locally split
sequence of quasi-coherent sheaves.

\begin{prop} \label{tangent-space-to-functor}
Let $Z$ be a scheme and $j:Z\to {\bf H}$ be
a morphism of schemes. Then there is a natural isomorphism of two
$\cO (Z)$-modules:

1) The set $T(Z)$ of morphisms $\nu :Z\times
Speck[\epsilon]/(\epsilon ^2) \to {\bf H}$ such that the composition
of the closed embedding $Z\to Z\times Speck[\epsilon]/(\epsilon ^2)$
with $\nu$ coincides with $j.$

2) The set of morphisms $\Hom _{\cA _Z}(j^*\cI ,j^*\cM ).$

In particular, for a closed point $x\in {\bf H}$ let $\cM _x$ be
the corresponding quotient of $\cA$ by the left ideal $\cI _x\subset
\cA.$ Then the $k$-vector space $\Hom _{\cA}(\cI _x,\cM _x)$ is
naturally isomorphic to the Zariski tangent space of ${\bf H}$ at
$x.$
\end{prop}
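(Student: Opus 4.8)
The plan is to work functorially and reduce the whole statement to the deformation-theoretic meaning of the dual numbers. First I would recall that $\mathbf{H}=\mathbf{H}^{[n]}_{\cA}$ represents the functor $\mathbf{M}^{[n]}_{\cA}$ (Theorem \ref{representability-hilbert}). By the Yoneda interpretation, a morphism $\nu: Z\times\Spec k[\epsilon]/(\epsilon^2)\to\mathbf{H}$ restricting to $j$ on the closed subscheme $Z$ is exactly an element of $\mathbf{M}^{[n]}_{\cA}(Z[\epsilon])$ whose restriction along the closed embedding $Z\hookrightarrow Z[\epsilon]$ equals the element $(N,w)$ classified by $j$, where I write $Z[\epsilon]=Z\times\Spec k[\epsilon]/(\epsilon^2)$. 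So $T(Z)$ is the set of lifts of the pair $(N,w)=(j^*\cM, j^*v_u)$ to a pair $(\tilde N,\tilde w)$ over $Z[\epsilon]$ as a locally free $\cA_{Z[\epsilon]}$-module of rank $n$ generated by $\tilde w$. The first task is therefore to classify such lifts, and the second is to match the classifying set with $\Hom_{\cA_Z}(j^*\cI,j^*\cM)$.

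For the classification, I would pull back the fundamental sequence \eqref{fundam-short-exact-seq} along $j$ to obtain a locally split short exact sequence of $\cA_Z$-modules
\begin{equation*}
0\to j^*\cI\to \cA_Z\to j^*\cM\to 0,
\end{equation*}
in which the surjection sends $1\mapsto w$. A lift $(\tilde N,\tilde w)$ over $Z[\epsilon]$ is the same as a surjection $\cA_{Z[\epsilon]}\to\tilde N$ of $\cA_{Z[\epsilon]}$-modules (sending $1\mapsto\tilde w$) with $\tilde N$ locally free of rank $n$ over $\cO_{Z[\epsilon]}$ and reducing to the given surjection $\cA_Z\to j^*\cM$ modulo $\epsilon$. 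Equivalently it is a lift of the left ideal $j^*\cI=\ker(\cA_Z\to j^*\cM)$ to a left ideal $\tilde\cI\subset\cA_{Z[\epsilon]}$ with $\cA_{Z[\epsilon]}/\tilde\cI$ flat over $\cO_{Z[\epsilon]}$. Because $k[\epsilon]/(\epsilon^2)$ is a square-zero extension of $k$ and the sequence is locally split, standard deformation theory of a submodule/quotient in a square-zero thickening identifies the set of such lifts (given the existence of the trivial lift $j^*\cM\oplus\epsilon\, j^*\cM$) with an $\cO(Z)$-torsor that is canonically trivialized by the base point $j$, and whose difference group is $\Hom_{\cA_Z}(j^*\cI,j^*\cM)$; the $\cO(Z)$-module structure comes from scaling in the $\epsilon$-direction. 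Concretely, writing a lift of $\tilde\cI$ by choosing for each local generator $a\in j^*\cI$ an element $a+\epsilon\,\eta(a)\in\cA_{Z[\epsilon]}$, the flatness/left-ideal conditions force $\eta$ to descend to a well-defined $\cA_Z$-linear map $j^*\cI\to\cA_Z/j^*\cI=j^*\cM$, and every such $\eta$ arises; changing the choice of lift of generators changes $\eta$ by an inner (trivial) amount, giving the bijection $T(Z)\cong\Hom_{\cA_Z}(j^*\cI,j^*\cM)$.

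I would then check that this bijection is $\cO(Z)$-linear: addition of tangent vectors corresponds to the Baer-type sum of the first-order deformations, and multiplication by $c\in\cO(Z)$ corresponds to precomposing the automorphism $\epsilon\mapsto c\epsilon$ of $k[\epsilon]/(\epsilon^2)$, which on the $\eta$ side is exactly scaling the homomorphism by $c$. The final assertion is the special case $Z=\Spec k$, $j=x$ a closed point: then $j^*\cI=\cI_x$, $j^*\cM=\cM_x=\cA/\cI_x$, and $T(\Spec k)$ is by definition the Zariski tangent space $T_x\mathbf{H}$, so we obtain $T_x\mathbf{H}\cong\Hom_{\cA}(\cI_x,\cM_x)$.

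The main obstacle I anticipate is the careful verification in the classification step that lifting the left ideal $\tilde\cI$ across the square-zero extension is unobstructed and that the lifts form a torsor under precisely $\Hom_{\cA_Z}(j^*\cI,j^*\cM)$ rather than some $\Ext$ group; this is where local splitting of \eqref{fundam-short-exact-seq} (guaranteed because $\cM$ is locally free over $\cO_{\bf H}$) is essential, since it kills the obstruction and identifies the automorphisms of a lift, so that the naive first-order computation with $a\mapsto a+\epsilon\,\eta(a)$ genuinely computes $\Hom$ and is independent of the local trivialization. I would make this precise by working Zariski-locally on $Z$, where $j^*\cM$ is free and a splitting exists, carrying out the explicit computation there, and then verifying that the resulting homomorphism $\eta$ is canonical (independent of the splitting) so that the local identifications glue to the global isomorphism $T(Z)\cong\Hom_{\cA_Z}(j^*\cI,j^*\cM)$.
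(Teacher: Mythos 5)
Your proposal is correct and takes essentially the same route as the paper: both translate $T(Z)$, via representability of ${\bf H}^{[n]}_{\cA}$, into flat lifts of the ideal $j^*\cI$ over $Z\times \Spec k[\epsilon]/(\epsilon^2)$, and both realize the bijection with $\Hom_{\cA_Z}(j^*\cI,j^*\cM)$ by the explicit local correspondence $\eta \leftrightarrow \bigl(\text{ideal generated by } a+\epsilon\,\eta(a)\bigr)$, checking independence of the chosen local lift and deducing local freeness from $\tilde\cI\cap\epsilon\cA=\epsilon j^*\cI$. The only cosmetic difference is that the paper constructs the forward map concretely as the composition $\beta\alpha$ of pullbacks of the universal sequence along $jp$ and $\phi$, where you instead cite the standard square-zero deformation argument, which amounts to the same computation.
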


\begin{proof}
Put $\Lambda :=Speck[\epsilon]/(\epsilon ^2)$ and
let $i:Z\to Z\times \Lambda$ and $p:Z\times \Lambda \to Z$ be the
closed embedding and the projection. Choose $\phi \in T(Z).$ We
obtain two morphisms
$$jp,\phi :Z\times \Lambda \to {\bf H}$$
which induce the following diagram of $\cA _{Z\times
\Lambda}$-modules
$$\begin{array}{ccccc}
& & \phi ^*\cI & & \\
& & \downarrow & & \\
(jp)^*\cI  & \stackrel{\alpha }{\to } & \cA_{U\times \Lambda} &
\to & (jp)^*\cM \\
& & \downarrow \beta & & \\
& & \phi ^*\cM & &
\end{array}
$$
in which the horizontal (resp. the vertical) part is the pullback by
$jp$ (resp. $\phi $) of the universal sequence
\ref{fundam-short-exact-seq}. Since $\phi i=j$ the image of the
composition $\beta \alpha$ is contained in $\epsilon \phi ^*\cM
=j^*\cM .$ Hence this morphism factors through the quotient
$(jp)^*\cI/\epsilon (jp)^*\cI  =j^*\cI .$ So we obtain an element
$\beta \alpha \in \Hom _{\cA_Z}(j^*\cI ,j^*\cM ).$

Vice versa, let $\gamma \in \Hom _{\cA_Z}(j^*\cI ,j^*\cM ).$ Let
$U\subset Z$ be an affine open subset. We can choose a lift
$\tilde{\gamma}:(jp)^*\cI\mid _{U\times \Lambda} \to \cA_{U\times \Lambda}$
 of $p^*\gamma$ as a map
of $\cO _{U\times \Lambda}$-modules. Consider the ideal $I_U\subset \cA_{U\times
\Lambda}$ generated by elements $a-\epsilon \tilde{\gamma}(a)$ for
$a\in (jp)^*\cI \mid _{U\times \Lambda}.$
This ideal $I_U$ depends only on $\gamma$
and not on a choice of $\tilde{\gamma}.$ Hence we obtain a global
ideal $I\subset \cA _{Z\times \Lambda }.$  Put $M=\cA_{Z\times
\Lambda}/I.$ We have by construction $i^*I=j^*\cI $ as ideals in
$\cA_Z.$ Hence also $i^*M=j^*\cM $ as $\cA_ Z$-modules. Notice also
that $I\cap \epsilon \cA _{Z\times \Lambda}=\epsilon j^*\cI ,$ so
that the $\cO _{Z\times \Lambda}$-module $M$ is locally free. In
particular $M\in M^{[n]}_{\cA}(Z\times \Lambda),$ so that $M=\phi
^*\cM$ for a unique map $\phi :Z\times \Lambda \to {\bf H}.$ This
map $\phi $ is in $T(Z),$ which defines the inverse map $Hom _{\cA
_Z}(j^*\cI ,j^*\cM )\to T(Z).$ So we have established a natural
bijection of sets
$$T(Z)=\Hom _{\cA_Z}(j^*\cI ,j^*\cM )$$
One can check that this is a bijections of $\cO
(Z)$-modules. This proves the proposition.
\end{proof}

Let
$$T_{\bf H}:=\cH om _{\cO_{\bf H}}(\Omega ^1_{\bf H},\cO_{\bf H})$$
be the tangent sheaf of ${\bf H}.$
For any open subset $U\subset {\bf H}$ the sections $T_{\bf H}(U)$ are
the $k$-linear derivations of the sheaf $\cO _{U}.$
Since ${\bf H}$ is
of finite type over $k$ it follows that $T_{\bf H}$ is a
coherent sheaf \cite{EGA}, Corollaire 16.5.6.
The following is a variation on the Proposition
\ref{tangent-space-to-functor}.

\begin{prop} \label{tangent-sheaf}
There is a natural isomorphism of $\cO _{\bf
H}$-modules $T_{\bf H}=\cH om _{\cA_{\bf H}}(\cI ,\cM).$
\end{prop}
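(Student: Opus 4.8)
The plan is to sheafify Proposition \ref{tangent-space-to-functor}. Recall from the text preceding the statement that for an open subset $U\subset {\bf H}$ the module $T_{\bf H}(U)$ consists of the $k$-linear derivations of $\cO_U$, and that such a derivation $D$ is the same datum as a $k$-algebra homomorphism $\cO_U\to \cO_U[\epsilon]/(\epsilon^2)$, $a\mapsto a+\epsilon D(a)$, reducing to the identity modulo $\epsilon$. Geometrically this is precisely a morphism $\nu:U\times\Lambda\to {\bf H}$, where $\Lambda=\Spec k[\epsilon]/(\epsilon^2)$, whose restriction along $U\hookrightarrow U\times\Lambda$ is the open inclusion $j_U:U\hookrightarrow {\bf H}$; indeed, since $U\times\Lambda$ and $U$ share the same underlying topological space, any such $\nu$ automatically factors through the open subscheme $U$. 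Thus $T_{\bf H}(U)$ coincides with the set $T(U)$ of Proposition \ref{tangent-space-to-functor} for $Z=U$ and $j=j_U$.

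First I would apply Proposition \ref{tangent-space-to-functor} to the open inclusion $j_U$. Since $j_U$ is an open immersion we have $j_U^*\cI=\cI\mid_U$ and $j_U^*\cM=\cM\mid_U$, so the proposition yields an $\cO(U)$-linear isomorphism
$$T_{\bf H}(U)=T(U)\;\cong\;\Hom_{\cA_U}(\cI\mid_U,\cM\mid_U)=\Gamma\bigl(U,\cH om_{\cA_{\bf H}}(\cI,\cM)\bigr),$$
the last equality being the definition of sheaf-$\Hom$ together with the fact that $\cH om$ commutes with restriction to opens. Next I would verify that these isomorphisms are compatible with restriction to smaller opens $U'\subset U$, so that they glue to a single isomorphism of sheaves. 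This is where the naturality asserted in Proposition \ref{tangent-space-to-functor} is used: both directions of the bijection there are constructed by pulling back the universal sequence \ref{fundam-short-exact-seq} (respectively, by cutting out the ideal $I$ inside $\cA_{Z\times\Lambda}$), and pullback is functorial, so restricting a derivation from $U$ to $U'$ corresponds exactly to restricting the associated $\cA$-linear map $\cI\mid_U\to\cM\mid_U$ to $\cI\mid_{U'}\to\cM\mid_{U'}$. Since both $U\mapsto T_{\bf H}(U)$ and $U\mapsto \Gamma\bigl(U,\cH om_{\cA_{\bf H}}(\cI,\cM)\bigr)$ are sheaves on ${\bf H}$, and the bijections respect both restriction and the $\cO(U)$-module structures, they assemble into the desired isomorphism of $\cO_{\bf H}$-modules $T_{\bf H}\cong \cH om_{\cA_{\bf H}}(\cI,\cM)$.

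The main obstacle is the bookkeeping in the second step: one must confirm that the local identifications of Proposition \ref{tangent-space-to-functor} are genuinely functorial with respect to the open immersions $U'\hookrightarrow U$ over ${\bf H}$, and in particular $\cO(U)$-linear, so that the glued map is $\cO_{\bf H}$-linear rather than merely an abstract bijection of sheaves. Everything else is formal, since the identification of $T_{\bf H}(U)$ with dual-number morphisms restricting to $j_U$ is standard (and $T_{\bf H}$ is coherent because ${\bf H}$ is of finite type over $k$), and the passage between $\Hom$ over an open and sections of $\cH om$ is definitional.
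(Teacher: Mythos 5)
Your proposal is correct and follows essentially the same route as the paper: both identify $T_{\bf H}(U)$ with morphisms $U\times\Spec k[\epsilon]/(\epsilon^2)\to {\bf H}$ restricting to the inclusion $U\hookrightarrow{\bf H}$ (using that $U\times\Lambda$ and $U$ have the same underlying space), and then invoke Proposition \ref{tangent-space-to-functor} for $Z=U$, $j=j_U$ to obtain $\Hom_{\cA_U}(\cI\mid_U,\cM\mid_U)$. Your explicit treatment of the gluing over restrictions $U'\subset U$ is the step the paper compresses into the word ``natural,'' but it is the same argument.
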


\begin{proof} Choose an affine open subset $U\subset {\bf H}.$ It
suffices to construct a natural isomorphism of $H^0(U,\cO
_U)$-modules
$$H^0(U,T_U)=\Hom _{\cA_U}(\cI _U,\cM _U)$$

\begin{lemma} Let $Y$ be a $k$-scheme. Put $\Lambda =Spec
k[\epsilon] /(\epsilon ^2)$ and consider the closed embedding
$i:Y\hookrightarrow Y\times \Lambda $ induces by the inclusion of
$Spec k$ in $\Lambda .$ Then there is a natural bijection between
the set of $k$-linear derivations $\delta :\cO _Y\to \cO _Y$
 and the set $S(Y)$ of morphisms of $k$-schemes $\mu: Y\times \Lambda
\to Y,$ such that $\mu i=\id _Y.$ This bijection respects the
$\cO (Y)$-module structure.
\end{lemma}

\begin{proof} Exercise.
\end{proof}

We apply the lemma to the case $Y=U.$
Since $U$ is open in ${\bf H}$ the set $S(U)=H^0(U,T_U)$ as in the lemma
coincides with
the set $T(U)$ of morphisms $\nu :U\times \Lambda \to {\bf H}$ such that
the composition $\nu i:U\to {\bf H}$ coincides with the embedding
$U\hookrightarrow {\bf H}.$ But by Proposition
\ref{tangent-space-to-functor} the $\cO(U)$-module $T(U)$ is
isomorphic to the $\cO(U)$-module $\Hom _{\cA_U}(\cI _U,\cM _U).$
This proves the proposition.
\end{proof}

\section{Some lemmas on free group actions}

In this subsection we prove some general simple statements about
schemes with a free group action and about equivariant sheaves on
them. Surely, this is "well known to experts" but we could not
locate a reference.

Fix a scheme $S.$ In this section all schemes are $S$-schemes. If a
group scheme $G$ acts on a scheme $X$ we denote by $m,p:G\times X\to
X$ the action and the projection morphisms respectively.

\begin{defi} \label{def-of-locally-triv-G-bundle}
Let $X$ be a scheme with an action of a group scheme
$G.$ A morphism $\pi :X\to \overline{X}$ is called a Zariski locally
trivial (resp. trivial) principal $G$-bundle if

1) $\pi \cdot m=\pi  \cdot p,$

2) there exists an open covering $\{W_i\}$ of $\overline{X}$ and for
each $i$ an isomorphism of $G$-schemes
$$G\times W_i\simeq \pi ^{-1}(W_i),$$
where the $G$-action on $G\times W_i$ is by left multiplication on
the first factor (resp. $G\times \overline{X}\simeq X$).
\end{defi}

\begin{lemma}\label{free-action-lemma}
Assume that a group scheme $G$ acts on a scheme $X.$ Suppose that
there exists a $G$-equivariant map $\sigma :X\to G$ (where $G$
acts on itself by left multiplication). Denote by $Z= \sigma
^{-1}(e)\subset X$ the closed subscheme, which is the preimage of
the identity $e\in G.$ Then there exists a $G$-equivariant
isomorphism $G\times Z\simeq X$ (so that the projection $X\to Z$
is a trivial principal $G$-bundle). If in addition the scheme $X$
is affine and $G$ is flat and quasi-compact, then $Z=\mathrm{Spec}
\Gamma (X,\cO _X)^G.$
\end{lemma}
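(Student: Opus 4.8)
The plan is to construct the $G$-equivariant isomorphism $G \times Z \simeq X$ explicitly using the section $\sigma$, and then verify the ring-theoretic identification $Z = \operatorname{Spec}\Gamma(X,\cO_X)^G$ under the extra affineness/flatness hypotheses. The key geometric idea is that a $G$-equivariant map $\sigma : X \to G$ (where $G$ acts on itself by left translation) is precisely the data of a trivialization: it lets me "straighten out" each orbit so that the $G$-coordinate is read off by $\sigma$.

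First I would define the candidate isomorphism. The map $G \times Z \to X$ should be $(g,z) \mapsto m(g,z)$, i.e.\ the restriction of the action morphism $m : G \times X \to X$ to $G \times Z$. Its proposed inverse is the morphism $X \to G \times Z$ given by $x \mapsto \bigl(\sigma(x),\, m(\sigma(x)^{-1}, x)\bigr)$. I would need to check that this lands in $G \times Z$: using $G$-equivariance of $\sigma$, namely $\sigma(m(g,x)) = g\,\sigma(x)$, one computes $\sigma\bigl(m(\sigma(x)^{-1},x)\bigr) = \sigma(x)^{-1}\sigma(x) = e$, so the second coordinate indeed lies in $Z = \sigma^{-1}(e)$. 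That these two morphisms are mutually inverse is a direct (functor-of-points) computation from the associativity and unit axioms of the action. $G$-equivariance of the isomorphism, where $G$ acts on $G \times Z$ by left multiplication on the first factor, follows likewise from equivariance of $\sigma$.

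For the second assertion, assume $X$ is affine and $G$ is flat and quasi-compact. I want to identify $Z$ with $\operatorname{Spec}\Gamma(X,\cO_X)^G$. From the trivialization $X \simeq G \times Z$ I get $\Gamma(X,\cO_X) \simeq \Gamma(G,\cO_G) \otimes \Gamma(Z,\cO_Z)$, and the $G$-action corresponds to left translation on the first tensor factor. Taking $G$-invariants and using that invariants of the left-regular representation recover the base ring, one obtains $\Gamma(X,\cO_X)^G \simeq \Gamma(Z,\cO_Z)$, whence $Z = \operatorname{Spec}\Gamma(X,\cO_X)^G$. The flatness and quasi-compactness of $G$ are what let me commute the formation of $G$-invariants (an equalizer defined via $m^*, p^*$) with the tensor decomposition and conclude the invariants are exactly $\Gamma(Z,\cO_Z)$.

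\textbf{The main obstacle} I expect is the second part: making precise and rigorous the claim that the $G$-invariant global functions coincide with $\Gamma(Z,\cO_Z)$ when $X$ is merely affine (not necessarily $Z$ affine a priori) and $G$ is only flat and quasi-compact rather than, say, a constant or smooth group. Invariants here must be defined scheme-theoretically as the equalizer of $m^* , p^* : \Gamma(X,\cO_X) \rightrightarrows \Gamma(G\times X, \cO_{G\times X})$, and one must check this equalizer is preserved under the trivialization and that, for the left-regular action, the invariants of $\Gamma(G,\cO_G)$ reduce to the scalars $\Gamma(S,\cO_S)$. This is where flatness of $G$ enters, ensuring $\Gamma(G \times X, \cO_{G\times X}) = \Gamma(G,\cO_G) \otimes \Gamma(X,\cO_X)$ and that the equalizer computation behaves well. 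The geometric first half, by contrast, is essentially formal once the inverse map is written down.
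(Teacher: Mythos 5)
Your first half is correct and is exactly the paper's argument: the map $G\times Z\to X$, $(g,z)\mapsto m(g,z)$, with inverse $x\mapsto\bigl(\sigma(x),\,m(\sigma(x)^{-1},x)\bigr)$, using equivariance of $\sigma$ to see that the second coordinate lands in $Z$. The problem is in the second half, and it sits precisely at the point you flag as the main obstacle — your proposed resolution of that obstacle does not work. First, the identification $\Gamma(X,\cO_X)\simeq\Gamma(G,\cO_G)\otimes\Gamma(Z,\cO_Z)$ is unjustified in the stated generality: the lemma assumes only that $G$ is flat and quasi-compact over the base $S$ (which itself need not be affine), not that $G$ is affine, and global sections of a product are not the tensor product of global sections without such hypotheses. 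Second, and more seriously, even granting that decomposition, the step where you "commute invariants with the tensor decomposition" is exactly a claim that a kernel commutes with $-\otimes_R A$, where $A=\Gamma(Z,\cO_Z)$ and $R=\Gamma(S,\cO_S)$: the invariants are $\ker(m^*-p^*)$, and on $\Gamma(G,\cO_G)\otimes_R A$ both $m^*$ and $p^*$ have the form $\varphi\otimes\mathrm{id}_A$. Identifying $\ker(\varphi\otimes\mathrm{id}_A)$ with $\ker(\varphi)\otimes_R A$ requires $A$ to be flat over $R$, which is neither assumed nor implied by flatness of $G$. So flatness of $G$ is the wrong tool for this step, and your argument cannot be completed as written.

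What actually makes the statement true is the existence of the identity section (equivalently, the inverse on $G$), not flatness of the $Z$-factor; flatness and quasi-compactness of $G$ enter elsewhere. The paper's route: the projection $q:X\simeq G\times Z\to Z$ is faithfully flat (flat because $G\to S$ is, surjective because of the identity section) and quasi-compact, hence a morphism of descent for quasi-coherent sheaves; the descent equalizer $\Gamma(Z,\cO_Z)=\ker\bigl(\Gamma(X,\cO_X)\rightrightarrows\Gamma(X\times_Z X,\cO)\bigr)$ is identified with the invariants equalizer via the shearing isomorphism $G\times_S X\simeq X\times_Z X$ (under which the two projections become $m$ and $p$), and $Z$ is affine because it is closed in the affine scheme $X$, whence $Z=\Spec\Gamma(X,\cO_X)^G$. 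Alternatively, your outline can be salvaged with no K\"unneth formula and no flatness at all: pull back the invariance equation $m^*f=p^*f$ along the morphism $X\to G\times_S X$, $x\mapsto(\sigma(x)^{-1},x)$. Since $m$ composed with this morphism equals $i_e\circ q$ (where $i_e:Z\to X$ is the section $z\mapsto(e,z)$) and $p$ composed with it is $\mathrm{id}_X$, one gets $f=q^*(i_e^*f)$; conversely pullbacks from $Z$ are invariant, and $q^*$ is injective because $q$ has the section $i_e$. This is the rigorous form of your slogan that "invariants of the left-regular representation recover the base": it is powered by the inverse map of $G$, not by flatness.
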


\begin{proof} Let $i:Z\to X$ be the inclusion. We claim that the $G$-equivariant map
$$\theta :G\times Z\stackrel{(\id ,i)}{\longrightarrow}
G\times X\stackrel{m}{\longrightarrow} X$$ is an isomorphism.

Denote by $\sigma ^{-1}:X\to G$ the composition of $\sigma $ with
the inverse map $(-)^{-1}:G\to G.$

Consider the composition
$$h:X\stackrel{(\sigma ^{-1},\id)}{\longrightarrow} G\times X
\stackrel{m}{\longrightarrow} X,\quad x\mapsto \sigma ^{-1}(x)x.$$
Then $\sigma \cdot h$ maps $X$ to $e\in G.$ Therefore $h:X\to Z,$
hence we obtain the map $X\stackrel{(\sigma ,h)}{\longrightarrow}
G\times Z,$ which is the inverse of $\theta:$
$$x\stackrel{(\sigma ,h)}{\mapsto} (\sigma (x),\sigma
^{-1}(x)x)\stackrel{\theta}{\mapsto} \sigma (x)\sigma ^{-1}(x)x=x,$$
$$(g,z)\stackrel{\theta}{\mapsto} gz \stackrel{(\sigma ,h)}{\mapsto
}(\sigma (gz),\sigma ^{-1}(gz)gz)=(g\sigma (z),(g\sigma
(z))^{-1}gz)=(g,z).$$ So $\theta $ is an isomorphism.

Let us prove the last assertion. Since the group scheme $G$ is flat,
it is automatically faithfully flat. Hence the projection morphism
$G\times Z\to Z$ is faithfully flat and quasi-compact. Thus it is a
morphism of strict descent with respect to quasi-coherent sheaves.
It is explained in the proof of Proposition
\ref{equivariant-sheaves} below that this implies the equality
$$Z=\mathrm{Spec} \Gamma (X,\cO _X)^G.$$
\end{proof}

\begin{prop} \label{free-action-proposition}
Let a flat affine group scheme $G$ act on a separated scheme $X.$
Assume that $X$ has an affine open covering $\{ U_i\}$ with the
following property:

For each $i,$ $U_i$ is $G$-invariant and there exists a
$G$-equivariant map $\sigma _i:U_i\to G.$

Then there exists a universal categorical quotient $\pi :X\to Z$
which is a Zariski locally trivial principal $G$-bundle.
\end{prop}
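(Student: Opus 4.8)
The plan is to build $Z$ by gluing the local quotients produced by Lemma \ref{free-action-lemma} and to define $\pi$ locally, the crucial point being that over affine pieces the quotient is canonically the spectrum of invariants and therefore independent of the auxiliary equivariant maps $\sigma_i$. First I would apply Lemma \ref{free-action-lemma} to each $U_i$ equipped with $\sigma_i$. This produces a closed subscheme $Z_i=\sigma_i^{-1}(e)\subset U_i$ and a $G$-equivariant isomorphism $G\times Z_i\simeq U_i$, so that the projection $\pi_i:U_i\to Z_i$ is a trivial principal $G$-bundle. Since each $U_i$ is affine and $G$ is flat and affine (hence quasi-compact), the last assertion of the lemma identifies $Z_i=\Spec\Gamma(U_i,\cO_{U_i})^G$, and under this identification $\pi_i$ is the canonical morphism attached to the inclusion $\Gamma(U_i)^G\hookrightarrow\Gamma(U_i)$ (indeed $\pi_i$ is $G$-invariant and restricts to the identity on the section $Z_i$).

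The heart of the argument, and the step I expect to be the main obstacle, is the gluing. Because $X$ is separated, every intersection $U_{ij}=U_i\cap U_j$, and more generally each triple intersection $U_{ijk}$, is again affine and $G$-invariant, and the restriction $\sigma_i\mid_{U_{ij}}$ is a $G$-equivariant map $U_{ij}\to G$; thus Lemma \ref{free-action-lemma} applies to $U_{ij}$ as well. The resulting quotient is $\Spec\Gamma(U_{ij},\cO_{U_{ij}})^G$, an object intrinsic to the $G$-scheme $U_{ij}$ that does not see the choice of $\sigma_i$ versus $\sigma_j$. Since $U_{ij}$ is a $G$-invariant open of $U_i\simeq G\times Z_i$, its image in $Z_i$ is an open subscheme $Z_{ij}^{(i)}$ with $Z_{ij}^{(i)}\simeq\Spec\Gamma(U_{ij})^G$, and likewise $Z_{ij}^{(j)}\subset Z_j$; these canonical identifications furnish isomorphisms $Z_{ij}^{(i)}\simeq Z_{ij}^{(j)}$, i.e. the gluing data. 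On triple overlaps all identifications factor through the single scheme $\Spec\Gamma(U_{ijk})^G$, so the cocycle condition holds automatically. I would therefore glue the $Z_i$ along these opens to obtain a scheme $Z$, and glue the $\pi_i$, which agree on overlaps, both being the canonical projection to the invariants, into a morphism $\pi:X\to Z$.

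It remains to check that $\pi$ has the asserted properties. The $Z_i$ form an open cover of $Z$ with $\pi^{-1}(Z_i)=U_i$ (a point of $U_j$ maps into $Z_i\cap Z_j$ exactly when it lies in the $G$-invariant open $U_{ij}$), and over each $Z_i$ we have the trivialization $G\times Z_i\simeq U_i$; hence $\pi$ is a Zariski locally trivial principal $G$-bundle in the sense of Definition \ref{def-of-locally-triv-G-bundle}. For the universal categorical quotient property I would argue locally on $Z$: a trivial bundle $G\times W\to W$ is a universal categorical quotient, while $\pi$ is faithfully flat and hence an epimorphism, so any $G$-invariant morphism $X\to Y$ descends uniquely over each $Z_i$ and the local descents agree on overlaps by epimorphism. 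Finally, since a base change of a Zariski locally trivial principal $G$-bundle along any $Z'\to Z$ is again one, the same reasoning applied after base change yields universality, completing the proof.
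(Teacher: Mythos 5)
Your proof is correct and takes essentially the same route as the paper: apply Lemma \ref{free-action-lemma} to each $U_i$ and then to the affine, $G$-invariant overlaps $U_i\cap U_j$ (affine by separatedness), identify each local quotient canonically with $\Spec \Gamma(U_i\cap U_j,\cO_{U_i\cap U_j})^G$ so that the gluing isomorphisms and the cocycle condition come for free, and glue the $Z_i$ and the projections into $\pi:X\to Z$. Your added details (the triple-overlap argument for the cocycle condition and the section/epimorphism descent argument for the universal categorical quotient property) simply flesh out steps the paper labels as clear.
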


\begin{proof}
Denote by $A_i$ the ring such that $U_i=\mathrm{Spec} A_i.$  It
follows from Lemma \ref{free-action-lemma} that for each $i$ there
exists a $G$-equivariant isomorphism $U_i\simeq G\times Z_i,$
where $Z_i=\sigma ^{-1}_i(e)$ is the (closed subscheme of $U_i$)
preimage of the identity $e\in G.$ Moreover, there is a natural
isomorphism $Z_i=\mathrm{Spec} (A_i^G).$ For two indices $i,j$ we
can apply this lemma to the $G$-invariant affine open subscheme
$U_i\cap U_j$ and the restriction of each of the $G$-equivariant
maps $\sigma _i,\sigma _j.$ We conclude that the affine schemes
$Z_i\cap U_j$ and $Z_j\cap U_i$ are canonically isomorphic to
$\mathrm{Spec} \Gamma(U_i\cap U_j,\cO _{U_i\cap U_j})^G.$ Hence
there is a canonical isomorphism $\phi _{ij}:Z_i\cap
U_j\stackrel{\sim}{\longrightarrow}Z_j\cap U_i.$ Clearly, the
isomorphisms $\{\phi _{ij}\}$ satisfy the cocycle condition and
hence there is a scheme $Z$ which is obtained by gluing the affine
schemes $Z_i$ along the open subschemes $Z_i\cap U_j$ using the
isomorphisms $\phi _{ij}.$

We have the canonical morphism of schemes $\pi :X\to Z$ which
locally on each $U_i$ is induced by the inclusion of rings $A^G_i
\hookrightarrow A_i.$ Clearly $\pi \cdot m=\pi \cdot p.$  If
we identify $U_i=G\times Z_i,$ then the restriction of the map $\pi$
to $U_i$ is the projection on the second factor. Hence $\pi$ is a
Zariski locally trivial principal $G$-bundle.

Obviously the map $\pi \vert _{U_i}:U_i\to Z_i$ is a universal
categorical quotient. Hence the map $\pi$ is also such. This proves
the proposition.
\end{proof}

Next we consider equivariant sheaves on schemes with free group
actions.

For a scheme $Y$ we denote by $Mod(Y)$ the category of $\cO
_Y$-modules and by $\mathrm{Qcoh}(Y)\subset Mod(Y)$ its full
subcategory of quasi-coherent sheaves on $Y.$

Let $G$ be a group scheme acting on a scheme $X.$ Consider the
diagram of schemes

\begin{equation}\label{simplicial-equivariant-diagram}
  \xymatrix{
    {G\times G\times X}
    \ar@<2.5ex>[r]^-{d_0}
    \ar@<-2.5ex>[r]^-{d_2}
    \ar[r]^-{d_1}
    &
   {G\times X}
    \ar@<2.5ex>[r]^-{d_0}
    \ar@<-2.5ex>[r]^-{d_1}
    &
    {X}
    \ar[l]_-{s_0}
    }
\end{equation}
 where
$$d_0(g_1,...,g_n,x)=(g_2,...,g_n,g_1^{-1}x),$$
$$d_i(g_1,...,g_n,x)=(g_1,...,g_ig_{i+1},...,g_n,x),\quad 1\leq
i\leq n-1,$$
$$d_n(g_1,...,g_n,x)=(g_1,...,g_{n-1},x),$$
$$s_0(x)=(e,x).$$
(Here $e:S\to G$ is the identity of $G.$)

Recall that a $G$-equivariant quasi-coherent sheaf on $X$ is a
pair $(F,\theta ),$ where $F\in \mathrm{Qcoh}(X)$ and $\theta $ is
an isomorphism
$$\theta :d_1^*F\stackrel{\sim}{\to} d_0^*F$$
satisfying the cocycle condition
$$d_0^*\theta \cdot d_2^*\theta =d^*_1\theta,\quad s^*_0\theta =\id
_ F.$$ A morphism of equivariant sheaves is a morphism of
quasi-coherent sheaves $F\to F^\prime$ which commutes with
$\theta.$ Denote by $\\mathrm{Qcoh} _G(X)$ the category of
$G$-equivariant quasi-coherent sheaves on $X.$

\begin{remark} Let $Y$ be another scheme and
let $\sigma :X\to Y$ be a morphism such that $\sigma \cdot
d_0=\sigma \cdot d_1$ (equivalently $\sigma \cdot m=\sigma \cdot
p.$) Then we obtain the functor $\sigma ^*:\mathrm{Qcoh}(Y)\to
\mathrm{Qcoh}_G(X),$  $A\mapsto (\sigma ^*A,\id ).$ There exists a
natural functor $\sigma _*^G:\mathrm{Qcoh} _G(X)\to Mod (Y),$
which is the right adjoint to $\sigma ^*.$ Namely for
$(F,\theta)\in \mathrm{Qcoh} _G(X)$ and an open subset $U\subset
Y$ put
$$\sigma _*^G(F,\theta )(U)=\Gamma (\sigma ^{-1}(U),F)^G
:=\{s\in F(\sigma ^{-1}(U))\vert d_0^*s=\theta (d^*_1s)\}.$$
\end{remark}

\begin{prop} \label{equivariant-sheaves} Let $G$ be a quasi-compact
and flat group scheme acting on a scheme $X.$ Let $\pi :X\to
\overline{X}$ be a Zariski locally trivial principal  $G$-bundle
(Definition \ref{def-of-locally-triv-G-bundle}). Then the functor
$\pi ^*:\mathrm{Qcoh}(\overline{X})\to \mathrm{Qcoh} _G(X)$ is an
equivalence of categories. This equivalence preserves locally free
sheaves.
\end{prop}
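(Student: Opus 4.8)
The plan is to prove this by descent theory. The statement is that for a Zariski locally trivial principal $G$-bundle $\pi: X \to \overline{X}$, pullback gives an equivalence $\mathrm{Qcoh}(\overline{X}) \simeq \mathrm{Qcoh}_G(X)$. The key principle is that a faithfully flat quasi-compact morphism is a morphism of strict descent for quasi-coherent sheaves, and that giving a $G$-equivariant structure on a sheaf on $X$ is the same as giving descent data for $\pi$ — provided $\pi$ behaves like a faithfully flat cover whose "fiber product" $X \times_{\overline{X}} X$ is identified with $G \times X$.

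First I would establish the local picture. By Definition \ref{def-of-locally-triv-G-bundle}, there is an open cover $\{W_i\}$ of $\overline{X}$ with $G$-isomorphisms $G \times W_i \simeq \pi^{-1}(W_i)$. Since the statement is local on $\overline{X}$ (both the category $\mathrm{Qcoh}_G(X)$ and $\mathrm{Qcoh}(\overline{X})$ glue along the $W_i$, and an equivalence can be checked locally and then glued via the cocycle condition), I would reduce to the trivial case $X = G \times \overline{X}$ with $G$ acting by left translation on the first factor. In this trivial case I would show directly that $\pi^*$ is an equivalence: its inverse is $s_0^* = s^*$ where $s = \id_e \times \id: \overline{X} \hookrightarrow G \times \overline{X}$ is the identity section, sending $(F,\theta)$ to $s^*F$. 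The computations $s^* \pi^* A = A$ and the reconstruction of $(F,\theta)$ from $s^*F$ using the equivariance isomorphism $\theta$ are the content here: the equivariant structure $\theta: d_1^*F \to d_0^*F$ over $G \times G \times \overline{X}$ precisely lets one transport the restriction along the identity section back to all of $G \times \overline{X}$.

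The cleaner and more robust route, which I would actually prefer, is to invoke faithfully flat descent globally rather than gluing local trivializations by hand. Since $G$ is flat and quasi-compact, the projection $p: G \times X \to X$ (equivalently $\pi$ after trivialization) is faithfully flat and quasi-compact, so $\pi$ itself is faithfully flat and quasi-compact; hence $\pi$ is a morphism of strict descent for quasi-coherent sheaves. The diagram (\ref{simplicial-equivariant-diagram}) is exactly the \v{C}ech/simplicial resolution $G \times G \times X \rightrightarrows G \times X \rightrightarrows X$, and over $\overline{X}$ one checks that $G \times X \simeq X \times_{\overline{X}} X$ and $G \times G \times X \simeq X \times_{\overline{X}} X \times_{\overline{X}} X$ via the maps $(g,x) \mapsto (x, g^{-1}x)$ (this uses condition (1), $\pi m = \pi p$, together with freeness of the action encoded in the local triviality). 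Under these identifications, a $G$-equivariant structure $\theta$ satisfying its cocycle condition is literally a descent datum for $\pi$, so strict descent yields the equivalence. The preservation of local freeness is then immediate: $\pi^*$ of a locally free sheaf is locally free, and conversely descent of local freeness along a faithfully flat quasi-compact morphism is standard since local freeness can be checked after faithfully flat base change.

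The main obstacle is verifying the identification $G \times X \simeq X \times_{\overline{X}} X$ as schemes over $\overline{X}$, and matching the simplicial face maps $d_0, d_1$ (and the triple-product maps) with the two projections of the \v{C}ech nerve of $\pi$. This is where the geometry of the free action enters: one must check, using the local trivializations $\pi^{-1}(W_i) \simeq G \times W_i$, that the map $G \times X \to X \times_{\overline{X}} X$, $(g,x) \mapsto (gx, x)$ (or its inverse depending on conventions) is an isomorphism, so that the abstract cocycle condition on $\theta$ coincides with the descent cocycle condition. Once this dictionary is set up, the equivalence is a formal consequence of effective descent, and I would only need to remark that faithful flatness of $G$ propagates to $\pi$ and that the relevant fiber products are computed locally where everything trivializes.
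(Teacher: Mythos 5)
Your proposal is correct, and the route you say you would actually take is genuinely different in structure from the paper's. The paper argues in two steps: (Case 1) for a \emph{trivial} bundle $X=G\times \overline{X}$ it identifies the equivariant diagram with the \v{C}ech nerve of $\pi$ (all face maps become projections after a change of coordinates) and invokes Grothendieck's faithfully flat descent theorem; (Case 2) in the general case it first proves full faithfulness of $\pi^*$ --- a statement local on $\overline{X}$, hence reducible to Case 1 via the adjunction map $A\to (\pi_*\pi^*A)^{G}$ --- and then proves essential surjectivity by descending $(F,\theta)$ over each trivializing open $W_i$ to some $A_i$ and gluing: full faithfulness forces the comparison isomorphisms $\phi_{ij}$ to be \emph{unique}, hence to satisfy the cocycle condition. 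Your first sketch is essentially this same argument (your remark that the categories glue and an equivalence can be checked locally is exactly the content of the paper's Case 2; your trivial-case inverse via the identity section $s_0^*$ is a fine, even more elementary, substitute for descent there, since descent along a morphism admitting a section is automatically effective). Your preferred global route instead verifies once and for all the torsor identification $G\times X\simeq X\times_{\overline{X}}X$, $(g,x)\mapsto (gx,x)$ --- a globally defined map, by condition (1) of Definition \ref{def-of-locally-triv-G-bundle}, whose bijectivity is checked over the trivializing cover --- together with faithful flatness and quasi-compactness of $\pi$ (both local on $\overline{X}$), and then obtains the equivalence from a single application of the descent theorem, with no gluing step. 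What the paper's decomposition buys is that the \v{C}ech identification is only ever needed for the trivial bundle, where it is a transparent computation; what yours buys is the elimination of the by-hand gluing argument, and an argument that works verbatim for torsors that are only fppf-locally trivial rather than Zariski-locally trivial. Both routes rest on the same citation (SGA 1, Exp.\ VIII, Thm.\ 1.1), and you correctly isolate the one nontrivial verification your approach requires: matching the face maps $d_i$ and the equivariance cocycle with the \v{C}ech projections and the descent cocycle under the torsor isomorphism.
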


\begin{proof} Case 1. Assume that $\pi$ is a trivial principal
homogeneous $G$-bundle, i.e. $X=G\times \overline{X}.$ In this
case the proposition follows from the faithfully flat descent for
quasi-coherent sheaves. Indeed, it is easy to see that in this
case the diagram \ref{simplicial-equivariant-diagram} above is
isomorphic to one where all the arrows $d_i$ become projections
and then a $G$-equivariant sheaf $(F,\theta)$ is the same as a
descent data for $F\in \mathrm{Qcoh}(X)$ relative to the
faithfully flat and quasi-compact morphism $\pi :X=G\times
\overline{X}\to \overline {X}.$ So it remains to apply the
corresponding theorem of Grothendieck \cite{SGA},Expose
VIII,Thm.1.1.

Case 2. This is the general case. We first show that the functor
$\sigma ^*:\mathrm{Qcoh}(\overline{X})\to \mathrm{Qcoh} _G(X)$ is
full and faithful. Let $A\in \mathrm{Qcoh} (\overline{X}).$ It
suffices to show that the adjunction morphism $\alpha :A\to \sigma
_*^G\sigma ^*A$ is an isomorphism. But this question is local on
$\overline{X},$ hence we may assume that $\pi$ is a trivial
principal $G$-bundle. Thus $\alpha$ is an isomorphism by Case 1.

Now we can prove that $\sigma ^*$ is essentially surjective. Let
$(F,\theta)\in \mathrm{Qcoh}_G(X)$ and choose an open covering
$\{W_i\}$ of $\overline{X}$ such that
$$\pi \vert _{\pi ^{-1}(W_i)}:\pi ^{-1}(W_i)\to W_i$$
is a trivial principal $G$-bundle. Let $(F_i,\theta _i)\in
\mathrm{Qcoh} _G(\pi ^{-1}(W_i))$ be the restriction of $(F,\theta
)$ to $\pi ^{-1}(W_i).$ By Case 1 there exists a object $A_i\in
\mathrm{Qcoh} (W_i)$ and an isomorphism $\beta _i:\pi
^*(A_i)\stackrel{\sim}{\to} (F_i,\theta _i).$ Also for each pair
of indices $(i,j)$ there exists a unique isomorphism
$$\phi _{ij}:A_i\vert _{W_i\cap W_j}\stackrel{\sim}{\to} A_j\vert
_{W_i \cap W_j}$$ such that the following diagram commutes
$$\begin{array}{rcl}
\pi ^*(A_i\vert _{W_i\cap W_j}) & \stackrel{\pi ^*\phi _{ij}}
{\longrightarrow} & \pi ^*(A_j\vert _{W_i\cap W_j})\\
\beta _i\vert _{W_i\cap W_j} \downarrow & & \downarrow \beta _j\vert
_{W_i\cap W_j}\\
(F_i,\theta _i)\vert _{W_i\cap W_j} & = & (F_j,\theta _j)\vert
_{W_i\cap W_j}
\end{array}
$$
The uniqueness of $\phi _{ij}$ means that the collection $\{\phi
_{ij}\}$ satisfies the cocycle condition, so that $A_i^s$ come
from a global $A\in \mathrm{Qcoh} (\overline{X}).$ This proves the
proposition.
\end{proof}

\end{document}